\numberwithin{equation}{section}
\numberwithin{figure}{section}
  \theoremstyle{plain}
  \newtheorem*{question*}{\protect\questionname}
\theoremstyle{plain}
\newtheorem{thm}{\protect\theoremname}[section]
  \theoremstyle{plain}
  \newtheorem{conjecture}[thm]{\protect\conjecturename}
  \theoremstyle{plain}
  \newtheorem{lem}[thm]{\protect\lemmaname}
  \providecommand{\conjecturename}{Conjecture}
  \providecommand{\lemmaname}{Lemma}
  \providecommand{\questionname}{Question}
\providecommand{\theoremname}{Theorem}
\begin{document}
\def\paperstyle{ams}
\newcommand{\customizepaperstyle}[2]{%
	\ifthenelse{\equal{\paperstyle}{#1}}{#2}{}%
}%
\newcommand{\Marten}{Marten Wortel}
\newcommand{\Miek}{Miek Messerschmidt}

\newcommand{\MartenEmail}{marten.wortel@gmail.com}
\newcommand{\MiekEmail}{mmesserschmidt@gmail.com}

\newcommand{\NWUAddress}{Unit for BMI; North-West University; Private Bag X6001; Potchefstroom; South Africa; 2520}
\newcommand{\NWUAddressLineBreaks}{%
Unit for BMI\\
North-West University\\
Private Bag X6001\\
Potchefstroom\\
South Africa\\
2520}

\newcommand{\COEDisclaimer}{%
Both authors were partially funded by DST-NRF Centre of Excellence in Mathematical and Statistical Sciences (CoE-MaSS). Opinions expressed and conclusions arrived at are those of the authors and are not necessarily to be attributed to the CoE-MaSS}

\newcommand{\MSECodesPrimary}{%
	46B20
}

\newcommand{\MSECodesSecondary}{%
	51F99, 
	46B07
}

\newcommand{\paperkeywords}{Intrinsic metric; unit sphere; normed space; local theory}

\global\long\def\norm#1{\left\Vert #1\right\Vert }

\global\long\def\norminline#1{\|#1\|}

\global\long\def\abs#1{\left|#1\right|}

\global\long\def\set#1#2{\left\{  \vphantom{#1}\vphantom{#2}#1\right.\left|\ #2\vphantom{#1}\vphantom{#2}\right\}  }

\global\long\def\sphere#1{\mathbf{S}_{#1}}

\global\long\def\closedball#1{\mathbf{B}_{#1}}

\global\long\def\openball#1{\mathbb{B}_{#1}}

\global\long\def\duality#1#2{\left\langle \vphantom{#1}\vphantom{#2}#1\right.\left|\ #2\vphantom{#2}\right\rangle }

\global\long\def\pathspace#1#2{\mathcal{P}_{#1}(#2)}

\global\long\def\planarpathspace#1#2{\mathcal{P}_{#1}^{\textup{planar}}(#2)}

\global\long\def\lengthoperator#1{L_{#1}}

\global\long\def\pathmetric#1#2{d_{#1,#2}}

\global\long\def\planarpathmetric#1#2{d_{#1,#2}^{\textup{planar}}}

\global\long\def\ray#1#2{r_{#1,#2}}

\global\long\def\parenth#1{\left(#1\right)}

\global\long\def\curly#1{\left\{  #1\right\}  }

\global\long\def\span{\textup{span}}

\global\long\def\image{\textup{Im}}

\global\long\def\R{\mathbb{R}}

\global\long\def\N{\mathbb{N}}

\global\long\def\Rnonneg{\mathbb{R}_{\geq0}}

\title[The intrinsic metric on unit spheres]{The intrinsic metric on the \\
unit sphere of a normed space}

\author{\Miek}
\author{\Marten}

\address{\Miek; \NWUAddress}
\email{\MiekEmail}

\address{\Marten; \NWUAddress}
\email{\MartenEmail}

\keywords{\paperkeywords}
\thanks{\COEDisclaimer}

\subjclass[2000]{\MSECodesPrimary\ (primary). \MSECodesSecondary\ (secondary)}

\begin{abstract}
Let $S$ denote the unit sphere of a real normed space. We show that
the intrinsic metric on $S$ is strongly equivalent to the induced
metric on $S$. Specifically, for all $x,y\in S$, 
\[
\norm{x-y}\leq d(x,y)\leq\sqrt{2}\pi\norm{x-y},
\]
where $d$ denotes the intrinsic metric on $S$. 
\end{abstract}

\maketitle


\section*{Correction}
The authors would like to thank the anonymous referee who pointed out the result
\cite[Theorem~3.5]{Schaffer} to them. This result solves the problem posed in this manuscript
and far predates it, (it actually proves Conjecture~\ref{conj} stated below).

The authors take some solace in the following:
That the result is true, with optimal constant $2$, as conjectured.
That their initial \emph{this-really-should-exist-somewhere-in-the-literature} gut feeling was actually correct.
And finally, that this result is now much easier to find by googling for the obvious ``intrinsic metric unit sphere''.

\section{Introduction}

Consider the following problem which arose in other questions under
investigation by the first author:
\begin{question*}
For the unit sphere of a real normed space, is the induced metric
strongly equivalent to the sphere's intrinsic metric?
\end{question*}
This paper will answer this question in the affirmative.

More precisely: For a unit sphere $S$ of a real normed space, the
length of a path $\rho:[0,1]\to S$ (assumed to be continuous), is
given by
\[
L(\rho):=\sup\set{\sum_{j=0}^{n-1}\norm{\rho(t_{j})-\rho(t_{j+1})}}{\begin{array}{c}
n\in\mathbb{N},\\
0=t_{0}<\ldots<t_{n}=1
\end{array}},
\]
and the intrinsic metric on $S$ is defined by taking the infimum
of the above quantity over all paths between two points. I.e., for
$x,y\in S$, we define the \emph{intrinsic metric on $S$} by
\[
d(x,y):=\inf\set{L(\rho)}{\begin{array}{c}
\rho:[0,1]\to S\ \mbox{continuous},\\
\rho(0)=x,\ \rho(1)=y
\end{array}}.
\]
The above question is then rephrased as: For the unit sphere $S$
in a normed space, do there exist constants $A,B>0$ such that, for
all $x,y\in S$,
\[
A\norm{x-y}\leq d(x,y)\leq B\norm{x-y}?
\]
To the authors' knowledge, the answer to this question does not appear
in the literature\footnote{False! See the correction above!},
a fact that is perhaps more surprising than the
positive solution to the problem which will be presented in this paper.

The problem can essentially be reduced to one in two dimensions and
apart from relying on John Ellipsoids---a fundamental structure from
local theory---the result follows from entirely elementary (albeit
somewhat technical) arguments.

We will now describe the structure of the paper.

After introducing the needed notation, definitions and preliminary
results in Section~\ref{sec:Preliminaries}, in Section~\ref{sec:R2-results}
our goal will be to prove Theorem~\ref{thm:main-result2}:

\setcounter{thm}{5}
\setcounter{section}{3}

\begin{thm}
For any norm $\norm{\cdot}$ on a real vector space $V$, let $d$
denote the intrinsic metric on the unit sphere $S$ of $V$. For all
$x,y\in S$,
\[
\norm{x-y}\leq d(x,y)\leq\sqrt{2}\pi\norm{x-y}.
\]

\end{thm}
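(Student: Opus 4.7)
\emph{Lower bound.} This is immediate from the triangle inequality: for any continuous $\rho : [0,1] \to S$ with $\rho(0) = x$, $\rho(1) = y$ and any partition $0 = t_0 < \ldots < t_n = 1$, we have $\sum_{j=0}^{n-1}\norm{\rho(t_{j+1}) - \rho(t_j)} \geq \norm{x-y}$, whence $L(\rho) \geq \norm{x-y}$ and therefore $d(x,y) \geq \norm{x-y}$.

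\emph{Upper bound, strategy.} My plan is to follow the three-step outline suggested in the introduction. First, I would reduce to the two-dimensional case: given linearly independent $x, y \in S$, let $W$ be the two-dimensional subspace they span. Every continuous path in $W \cap S$ (the unit sphere of $W$ with the restricted norm) is a continuous path in $S$ of the same length, so $d_S(x,y) \leq d_{W \cap S}(x,y)$ and it suffices to prove the upper bound when $V$ is two-dimensional. The cases $x = \pm y$ can be covered by choosing any two-dimensional subspace through $x$ and $y$. Second, in two dimensions, John's ellipsoid theorem produces an ellipsoid $E$ inscribed in the unit ball $B$ of $\norm{\cdot}$ with $B \subseteq \sqrt{2}\,E$; letting $\norm{\cdot}_E$ denote the Euclidean norm with unit ball $E$, this gives
\[
\norm{v} \leq \norm{v}_E \leq \sqrt{2}\,\norm{v} \qquad (v \in V),
\]
so $S$ is confined to the Euclidean annulus $\{v : 1 \leq \norm{v}_E \leq \sqrt{2}\}$. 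Third, I would exhibit a path on $S$ from $x$ to $y$ whose $\norm{\cdot}$-length is at most $\sqrt{2}\pi\,\norm{x-y}$.

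\emph{Construction of the path.} My first attempt would be to radially project a Euclidean arc onto $S$. Set $\hat x = x/\norm{x}_E$ and $\hat y = y/\norm{y}_E$; these lie on the Euclidean unit circle $\partial E$. Let $\sigma : [0,1] \to \partial E$ be the shorter Euclidean arc from $\hat x$ to $\hat y$, and define $\gamma(t) := \sigma(t)/\norm{\sigma(t)}$; this is a continuous path from $x$ to $y$ on $S$. To bound $L(\gamma)$ I would combine four ingredients: the classical Euclidean inequality that the arc length of $\sigma$ is at most $\tfrac{\pi}{2}\norm{\hat x - \hat y}_E$; the bound $\norm{\cdot} \leq \norm{\cdot}_E$, used to estimate $\norm{\cdot}$-lengths of auxiliary curves by their Euclidean lengths; the bound $\norm{\cdot}_E \leq \sqrt{2}\,\norm{\cdot}$, used to re-express Euclidean chord lengths in $\norm{\cdot}$; and control of the radial factor $t \mapsto 1/\norm{\sigma(t)} \in [1, \sqrt{2}]$, whose variation along $\sigma$ is limited by the convexity of $S$.

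\emph{Main obstacle.} The technical core is the length estimate for $\gamma$. Since $\norm{\cdot}$ need not be smooth, $L(\gamma)$ must be handled through discrete polygonal sums rather than by differentiating, which is the source of the ``somewhat technical'' work promised in the introduction. The radial projection $z \mapsto z/\norm{z}$ is not a Euclidean isometry, so its distortion must be split into a \emph{tangential} piece (controlled by the Euclidean arc length of $\sigma$) and a \emph{radial} piece (the total variation of the scaling factor), and each must be bounded without spoiling the clean product $\sqrt{2}\cdot\pi$. If this route proves too lossy, a natural fallback is a bisection scheme using midpoints $z = (x+y)/\norm{x+y}$, since infinite bisection in the Euclidean case recovers exactly the arc-to-chord ratio $\pi/2$ on the unit circle, and the extra $\sqrt{2}$ factor from John's theorem would then account for the announced constant.
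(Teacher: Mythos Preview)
Your architecture is exactly the paper's: reduce to a two-dimensional subspace $W$, invoke John's theorem to sandwich $\sphere X\cap W$ between $\sphere E$ and $\sqrt{2}\,\sphere E$, radially project the short Euclidean arc onto $\sphere X$, and compare lengths via the arc--chord inequality $\pathmetric E{\sphere E}\leq\tfrac{\pi}{2}d_E$ on $\sphere E$. The lower bound and the reduction are fine.

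The gap is precisely where you flag it. Your tangential/radial split needs the radial variation to be controlled \emph{multiplicatively} by the Euclidean arc length (otherwise the bound fails for nearby $x,y$), and ``convexity of $S$'' is too vague to deliver this. The paper's mechanism is a tangent-line argument: since $\closedball E\subseteq\closedball X$, for $p\in\sphere X$ any nearby point of $\sphere X$ is wedged between the two lines through $p$ tangent to $\sphere E$, and hence every short secant of $\sphere X$ at $p$ makes an angle $\alpha\leq\arccos(K^{-1})$ with the direction perpendicular to $\ray 0p$ (Lemma~\ref{cor:angles}). In polar coordinates $\gamma(\theta)=r(\theta)c(\theta)$ this gives $\cos\alpha\geq K^{-1}$, so $\norm{\gamma'}_E=r/\cos\alpha\leq K\cdot K=K^2$; equivalently, the radial projection $\sphere E\to\sphere X$ is locally $K^2$-Lipschitz in the Euclidean metric (Lemma~\ref{lem:estimate_segment}). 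One also needs the easy companion fact $\norm{\hat x-\hat y}_E\leq\norm{x-y}_E$ (Lemma~\ref{lem:norm_decreasing}), which you do not mention, to close the chain
\[
L_X(\gamma)\leq L_E(\gamma)\leq K^2 L_E(\sigma)\leq K^2\tfrac{\pi}{2}\norm{\hat x-\hat y}_E\leq K^2\tfrac{\pi}{2}\norm{x-y}_E\leq K^3\tfrac{\pi}{2}\norm{x-y}_X,
\]
and with $K=\sqrt{2}$ this is $\sqrt{2}\pi$. So the missing idea is concrete: use the tangents from $p$ to the inscribed Euclidean circle to bound the slope of $\sphere X$ against the radial direction. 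Your bisection fallback faces the same issue---normalizing the midpoint $(x+y)/\norm{x+y}$ incurs exactly the uncontrolled radial drift you are trying to avoid---so it is unlikely to bypass the tangent-line step.
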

\setcounter{thm}{0}
\setcounter{section}{1}

A crucial ingredient is that of the John Ellipsoid: The largest ellipsoid
(Euclidean ball of largest volume) that can be contained inside a
unit ball of a finite dimensional normed space.
\def\johncite{\customizepaperstyle{lms}{(}John's Theorem \cite[Theorem 12.1.4]{AlbiacKalton}\customizepaperstyle{lms}{)}}%
\begin{thm}[\johncite]
\label{thm:John's-Theorem}
Let $W$ be any normed space of dimension $n>1$. With $\norm{\cdot}_{E}$
denoting the Euclidean norm on $\R^{n}$, there exists a norm one
isomorphism $T:(\R^{n},\norm{\cdot}_{E})\to W$ with inverse $T^{-1}:W\to(\R^{n},\norm{\cdot}_{E})$
whose norm is at most $\sqrt{n}$.
\end{thm}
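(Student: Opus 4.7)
The plan is to take $T$ to be a linear isomorphism $(\R^{n},\|\cdot\|_{E})\to W$ maximizing $|\det T|$ subject to $\|T\|\leq 1$, so that $T(B_{2}^{n})$ is the Euclidean ellipsoid of maximum volume inscribed in the unit ball $B_{W}$; the bound $\|T^{-1}\|\leq\sqrt{n}$ will then follow from this maximality by exhibiting a strictly larger inscribed ellipsoid whenever the bound fails.

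Existence of $T$ and $\|T\|=1$: The set $\{T\in\mathrm{Hom}(\R^{n},W):\|T\|\leq 1\}$ is compact in $\mathrm{Hom}(\R^{n},W)\cong\R^{n^{2}}$ and $T\mapsto|\det T|$ is continuous, so a maximizer $T$ exists. Since $B_{W}$ contains a Euclidean ball of positive radius (as $W$ is finite dimensional), $\max|\det T|>0$, so $T$ is invertible. If $\|T\|<1$, then $T/\|T\|$ has operator norm $1$ and strictly larger determinant, contradicting maximality; hence $\|T\|=1$.

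For $\|T^{-1}\|\leq\sqrt{n}$, I transfer the problem to $\R^{n}$: let $\widetilde{B}:=T^{-1}(B_{W})$, a centrally symmetric convex body whose maximum-volume origin-centered inscribed ellipsoid is $B_{2}^{n}$. I must show $\widetilde{B}\subseteq\sqrt{n}\,B_{2}^{n}$, and argue by contradiction: suppose some $x\in\widetilde{B}$ has $\|x\|_{E}>\sqrt{n}$. An orthogonal rotation (which preserves $B_{2}^{n}$ and the maximality) puts $x=re_{1}$ with $r>\sqrt{n}$, and central symmetry of $\widetilde{B}$ yields $K:=\mathrm{conv}(B_{2}^{n}\cup\{\pm re_{1}\})\subseteq\widetilde{B}$. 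It suffices to produce an origin-centered ellipsoid inside $K$ of volume greater than $\mathrm{vol}(B_{2}^{n})$, contradicting maximality.

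Exploiting the axial and central symmetries of $K$, I restrict to ellipsoids $E_{a,b}=\{y:y_{1}^{2}/a^{2}+(y_{2}^{2}+\cdots+y_{n}^{2})/b^{2}\leq 1\}$ of volume $ab^{n-1}\mathrm{vol}(B_{2}^{n})$. A support-function analysis of $K$ (whose boundary in $\{y_{1}\geq 0\}$ consists of a spherical cap of $\partial B_{2}^{n}$ where $y_{1}\leq 1/r$ together with the tangent cone from $re_{1}$) reduces $E_{a,b}\subseteq K$ to the single scalar inequality $a^{2}+(r^{2}-1)b^{2}\leq r^{2}$. Lagrange multipliers maximize $ab^{n-1}$ under this constraint at $a^{2}=r^{2}/n$, $b^{2}=r^{2}(n-1)/[n(r^{2}-1)]$, giving
\[
(ab^{n-1})^{2}=\frac{r^{2n}(n-1)^{n-1}}{n^{n}(r^{2}-1)^{n-1}}.
\]
A log-convexity check (the function $s\mapsto n\log(s/n)-(n-1)\log((s-1)/(n-1))$ vanishes together with its first derivative at $s=n$ but has strictly positive second derivative there) shows this quantity strictly exceeds $1$ for $r>\sqrt{n}$, producing the desired contradiction. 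The main obstacle is the support-function reduction: verifying that the tangent-cone inequality not only is necessary but also dominates the spherical-cap constraint, so that a single scalar inequality fully describes the containment $E_{a,b}\subseteq K$; once this is in place, the optimization and the log-convexity step are essentially mechanical.
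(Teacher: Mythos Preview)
The paper does not prove this statement at all: Theorem~\ref{thm:John's-Theorem} is quoted with the citation ``John's Theorem \cite[Theorem~12.1.4]{AlbiacKalton}'' and is used as a black box in the proof of Theorem~\ref{thm:main-result2}. There is therefore no paper proof to compare against.

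That said, your argument is the classical proof of John's theorem via the maximum-volume inscribed ellipsoid, and it is correct. A couple of small remarks. First, your support-function reduction is indeed valid: writing $h_{E_{a,b}}(u)=\sqrt{b^{2}+(a^{2}-b^{2})u_{1}^{2}}$ and $h_{K}(u)=\max(1,r|u_{1}|)$ for $\|u\|_{E}=1$, both the spherical-cap regime $|u_{1}|\le 1/r$ and the cone regime $|u_{1}|>1/r$ collapse to the single inequality $a^{2}+(r^{2}-1)b^{2}\le r^{2}$ once $a\ge b$, and your optimal $(a,b)$ does satisfy $a>b$ since $r^{2}>n$. Second, your ``log-convexity check'' as phrased only gives positivity of $\phi(s)=n\log(s/n)-(n-1)\log((s-1)/(n-1))$ for $s$ near $n$; the clean global argument is that $\phi'(s)=(s-n)/[s(s-1)]>0$ for all $s>n$, so $\phi$ is strictly increasing on $(n,\infty)$ and hence $\phi(s)>\phi(n)=0$ there. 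With this adjustment the proof is complete.
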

Specifically, all two-dimensional subspaces of a normed space has
a Banach-Mazur distance of at most $\sqrt{2}$ from two-dimensional
Euclidean space. Of course, the intrinsic metric on a normed space
$V$'s unit sphere is bounded above by the ``planar'' intrinsic
metric: where all paths in the defining infimum are taken to live
in any two-dimensional subspace of $V$. This allows us to reduce
the question to $\R^{2}$, where $S$ lies between a Euclidian unit
sphere $\sphere E$ and $\sqrt{2}\sphere E$.

In this setting, the crucial ingredients are Lemmas~\ref{lem:estimate_segment}
and~\ref{lem:norm_decreasing}, which allows us to conclude the local
bi\hyp{}Lipschitzness of the map $\sigma:S\to\sphere E$ defined
by $\sigma(x):=x/\norm x_{E}$ where both $S$ and $\sphere E$ are
endowed with the Euclidean induced metric. Since the Euclidean induced--
and intrinsic metrics on $\sphere E$ are easily calculated and related
(Lemma~\ref{lem:bound-eclidean-arclength-by-shortcut}), our main
results, Theorems~\ref{thm:main-result1} and~\ref{thm:main-result2},
then easily follow.

\medskip

We note that the constant $\sqrt{2}\pi$ obtained in our main result
(Theorem~\ref{thm:main-result2}) is likely not optimal. In two dimensions,
the $\norm{\cdot}_{\infty}$-norm provides a worst case for the John
Ellipsoid. Let $S_{\infty}$ be the unit sphere of this norm, with
intrinsic metric $d_{\infty}$, then for $x,y\in S_{\infty}$, it
is easily seen that $\norm{x-y}_{\infty}\leq d_{\infty}(x,y)\leq2\norm{x-y}_{\infty}$.
This prompts the following conjecture:
\begin{conjecture}\label{conj}\footnote{This conjecture is proven true in \cite[Theorem~3.5]{Schaffer}.}
For any norm $\norm{\cdot}$ on a real vector space $V$, let $d$
denote the intrinsic metric on the unit sphere $S$ of $V$. For all
$x,y\in S$,
\[
\norm{x-y}\leq d(x,y)\leq2\norm{x-y}.
\]

\end{conjecture}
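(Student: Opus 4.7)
The plan is to split the conjecture into a trivial lower bound, a reduction to two dimensions, and a sharp planar inequality.

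The lower bound $\norm{x-y}\leq d(x,y)$ is immediate from the definition of $L$ applied to the trivial partition $n=1$, since any continuous $\rho$ from $x$ to $y$ satisfies $L(\rho)\geq\norm{\rho(0)-\rho(1)}=\norm{x-y}$. For the upper bound, I would let $W:=\span(x,y)$; this is at most two-dimensional, and $S\cap W$ is precisely the unit sphere of the restriction of $\norm{\cdot}$ to $W$. Every continuous path in $S\cap W$ is also a continuous path in $S$, so if $d_{W}$ denotes the intrinsic metric on $S\cap W$, then $d(x,y)\leq d_{W}(x,y)$, and the whole problem reduces to the two-dimensional case.

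So assume $\dim V=2$; then $S$ is a centrally symmetric convex Jordan curve in $\R^{2}$ and, provided $y\neq\pm x$, the set $S\setminus\{x,y\}$ has exactly two connected components. The candidate path is the shorter of the closures of these two arcs. The entire difficulty condenses into the following \emph{Key Lemma}: for any centrally symmetric convex body $B\subset\R^{2}$ with boundary $S$, the shorter of the two arcs of $S$ joining $x,y\in S$ has length, measured in the norm of $B$, at most $2\norm{x-y}$. Parametrising $S$ by norm-arc-length $\gamma\colon[0,P]\to S$, where $P$ is the norm-perimeter of $S$, central symmetry gives $\gamma(s+P/2)=-\gamma(s)$, and the Key Lemma is equivalent to the ``chord-versus-arc'' inequality
\[
\norm{\gamma(s)-\gamma(t)}\geq (t-s)/2\quad\text{for all }0\leq s\leq t\leq P/2.
\]

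I would approach the Key Lemma by polygonal approximation: establish it first for centrally symmetric convex polygons, and then pass to arbitrary $S$ via outer polygonal approximation using supporting lines (intersected with their reflection through the origin to preserve central symmetry). For polygons I would try induction on the number of vertices, the inductive step collapsing a pair of antipodal edges while tracking both sides of the inequality carefully. An alternative, more analytic route is to exploit central symmetry directly by pairing the chord $\gamma(t)-\gamma(s)$ with its antipodal image $\gamma(t+P/2)-\gamma(s+P/2)=-(\gamma(t)-\gamma(s))$ and averaging against convexity.

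The main obstacle is the tightness of the Key Lemma. In the $\norm{\cdot}_{\infty}$-norm on $\R^{2}$, equality $d(x,y)=2\norm{x-y}$ is achieved both by antipodal vertices such as $(1,1)$ and $(-1,-1)$ and by right-angle pairs such as $(1,0)$ and $(0,1)$, so any proof must be essentially sharp with no slack available to absorb constants. This is presumably also why the paper's John-Ellipsoid route, which incurs several non-optimal factors when passing through Euclidean space, falls short of the optimal constant $2$: closing the gap seems to require a genuinely intrinsic planar argument rather than any Euclidean comparison.
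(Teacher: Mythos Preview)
The paper does not contain a proof of this statement at all: it is stated as a conjecture, and the footnote merely records that the sharp constant $2$ is established in \cite[Theorem~3.5]{Schaffer}. The paper's own arguments (John's ellipsoid plus the planar Lemmas~\ref{lem:estimate_segment}--\ref{lem:norm_decreasing}) yield only the weaker constant $\sqrt{2}\pi$ in Theorem~\ref{thm:main-result2}. So there is no ``paper's proof'' to compare your proposal against; the relevant benchmark is Sch\"affer's argument, which is not reproduced here.

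As to your proposal itself: the lower bound and the reduction to a two-dimensional subspace are correct and match what the paper does for its weaker theorem. Your reformulation of the planar problem as a chord-versus-arc inequality for the norm-arc-length parametrisation is also the right way to phrase the heart of the matter. However, the proposal stops precisely where the actual work begins. You do not prove the Key Lemma; you only list two possible strategies. The polygonal-induction sketch is not a proof: you have not specified what the induction hypothesis is, nor how ``collapsing a pair of antipodal edges'' interacts with the chord length $\norm{\gamma(s)-\gamma(t)}$ measured in the \emph{changing} norm of the collapsed polygon, nor why the base case holds. The ``averaging against convexity'' remark is even vaguer---pairing a chord with its antipodal image just reproduces the same chord with opposite sign, and it is unclear what convexity inequality you intend to invoke. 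As you yourself note, the inequality is sharp (equality for $\ell_\infty$ at several configurations), so any slack-based or perturbative argument is doomed; a proof must be exact at every step. In short, the proposal correctly isolates the difficulty but does not overcome it, and neither does the paper---both defer to Sch\"affer.
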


\section{Definitions, notation and preliminary results\label{sec:Preliminaries}}

This section will explicitly define all notation used in this paper.
Since we will translate between many different metrics on many different
sets, we will take extreme care to make our notation as explicit as
possible.

Let $V$ be a real vector space. Let $A$ be an arbitrary index symbol
and $\norm{\cdot}_{A}$ any norm on $V$. We will denote unit sphere,
closed unit ball and open unit ball with respect to $\norm{\cdot}_{A}$
respectively by $\sphere A$, $\closedball A$ and $\openball A.$

For any subset $M\subseteq V$, we define the \emph{induced metric}
$d_{A}:M\to\Rnonneg$ on $M$ by $d_{A}(x,y):=\norm{x-y}_{A}$ for
$v,w\in M$.

We define the \emph{$A$-$M$-path-space} by
\[
\mathcal{P}_{A}(M):=\set{\rho}{\rho:[0,1]\to M,\ \norm{\cdot}_{A}-\mbox{continuous}},
\]
and the \emph{planar $A$-$M$-path-space} by
\[
\planarpathspace AM:=\set{\rho\in\pathspace AM}{\dim\parenth{\span\parenth{\image\rho}}=2}.
\]
We define the \emph{$A$-path-length operator} $L_{A}:\mathcal{P}_{A}(V)\to\mathbb{R}_{\geq0}\cup\{\infty\}$
by
\[
\lengthoperator A(\rho):=\sup\set{\sum_{j=0}^{n-1}\norm{\rho(t_{j})-\rho(t_{j+1})}_{A}}{\begin{array}{c}
n\in\mathbb{N},\\
0=t_{0}<\ldots<t_{n}=1
\end{array}}\quad(\rho\in\mathcal{P}_{A}(V)).
\]
We define the \emph{(extended) $A$-$M$-intrinsic metric }$\pathmetric AM:M\to\Rnonneg\cup\{\infty\}$,
by
\[
\pathmetric AM(v,w):=\inf\set{\lengthoperator A(\rho)}{\begin{array}{c}
\rho\in\pathspace AM,\ \rho(0)=v,\ \rho(1)=w\end{array}}\quad(v,w\in M)
\]
and the \emph{(extended) $A$-$M$-planar-intrinsic metric }$\planarpathmetric AM:M\to\Rnonneg\cup\{\infty\}$
by
\[
\planarpathmetric AM(v,w):=\inf\set{\lengthoperator A(\rho)}{\begin{array}{c}
\rho\in\planarpathspace AM,\ \rho(0)=v,\ \rho(1)=w\end{array}}\quad(v,w\in M).
\]

We introduce the following abbreviated notation that will aid in readability
of the paper: For (extended) metrics $d$ and $d'$ on some set $D$,
subset $M\subseteq D$, and constant $K>0$, by
\[
\mbox{``}d\leq Kd'\quad\mbox{on}\quad M\mbox{''}
\]
we will mean $d(a,b)\leq Kd'(a,b)$ for all $a,b\in M$.

For any real normed space $(V,\norm{\cdot}_{A})$ and subset $M\subseteq V$,
since $\planarpathspace AM\subseteq\pathspace AM\subseteq\pathspace AV$,
the chain of inequalities
\[
d_{A}=\pathmetric AV\leq\pathmetric AM\leq\planarpathmetric AM\leq\infty\quad\mbox{on}\quad M
\]
is easy to verify.

An elementary calculation establishes the following lemma:
\begin{lem}
\label{lem:bound-eclidean-arclength-by-shortcut}With $\norm{\cdot}_{E}$
denoting the Euclidean norm on $\mathbb{R}^{2}$,
\[
d_{E}\leq\pathmetric E{\sphere E}\leq\frac{\pi}{2}d_{E}\quad\mbox{on}\quad\sphere E.
\]

\end{lem}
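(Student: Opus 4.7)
The plan is to compute both sides explicitly in terms of the angle between the two points. For $x,y\in\sphere E$, let $\theta\in[0,\pi]$ denote the angle between $x$ and $y$ viewed as vectors in $\R^2$; the identity $\norm{x-y}_E^2 = 2-2\cos\theta = 4\sin^2(\theta/2)$ gives $d_E(x,y) = 2\sin(\theta/2)$. A standard computation on the Euclidean circle shows that $\pathmetric E{\sphere E}(x,y) = \theta$, the length of the shorter arc between $x$ and $y$. Granting these two identifications, the lemma reduces to the pair of elementary trigonometric inequalities
\[
2\sin(\theta/2)\ \leq\ \theta\ \leq\ \pi\sin(\theta/2)\qquad(\theta\in[0,\pi]).
\]

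The left inequality is the familiar bound $\sin s\leq s$ for $s\geq 0$ applied at $s=\theta/2$. For the right inequality, set $f(\theta):=\pi\sin(\theta/2)-\theta$ on $[0,\pi]$. Then $f(0)=0$, $f(\pi)=\pi-\pi=0$, and $f''(\theta)=-(\pi/4)\sin(\theta/2)\leq 0$ on $[0,\pi]$, so $f$ is concave on this interval. A concave function on a closed interval that vanishes at both endpoints is nonnegative throughout, yielding $f\geq 0$ and hence $\theta\leq\pi\sin(\theta/2)$ as required.

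The only preliminary fact that needs genuine justification is the identification $\pathmetric E{\sphere E}(x,y)=\theta$. The upper bound follows by using the shorter arc, parametrised (after an isometric change of coordinates placing $x=(1,0)$, $y=(\cos\theta,\sin\theta)$) as $\gamma(t):=(\cos(t\theta),\sin(t\theta))$ for $t\in[0,1]$, as a competitor path: a direct calculation from the supremum-of-chord-sums definition of $\lengthoperator E$ gives $\lengthoperator E(\gamma)=\theta$. For the matching lower bound, I would lift an arbitrary continuous $\rho\in\pathspace E{\sphere E}$ with $\rho(0)=x$ and $\rho(1)=y$ to a continuous angular parameter $\phi:[0,1]\to\R$ with $\phi(0)=0$ and $\phi(1)\equiv\pm\theta\pmod{2\pi}$, observe that $\phi$ must traverse an angular range of absolute value at least $\theta$, and use that on a sufficiently fine partition the chord sums approach the total variation of $\phi$, which is bounded below by $\theta$. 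I expect this preliminary identification to be the only substantive step; the remainder of the lemma is the two-line trigonometric calculation above.
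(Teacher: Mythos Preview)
Your proposal is correct and is precisely the ``elementary calculation'' the paper alludes to but omits: the paper gives no proof beyond that phrase, so your reduction to $2\sin(\theta/2)\leq\theta\leq\pi\sin(\theta/2)$ on $[0,\pi]$, together with the standard identification of $\pathmetric E{\sphere E}$ with arc length, is exactly what was intended. The concavity argument for the right-hand inequality and the sketch of the arc-length identification are both sound.
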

If $\R^{2}$ is endowed with the euclidean norm $\norm{\cdot}_{E}$
arising from an inner product $\duality{\cdot}{\cdot}$, for elements
$x,y\in\R^{2}$ the \emph{ray from $x$ through $y$ }is denoted by
$\ray xy$ and defined by $\ray xy:=\set{(1-t)x+ty}{t\geq0}$. For a point
$x$ and points $y,z\in\R^{2}$ distinct from $x$, when referring to the
size of the angle between $\ray xy$ and $\ray xz$ we will mean the
quantity
\[
\arccos\parenth{\frac{\duality{y-x}{z-x}}{\norm{y-x}_{E}\norm{z-x}_{E}}}\in[0,\pi].
\]
For points $v,w,x,y\in\R^{2}$, we will say \emph{the ray
$\ray xy$ lies between the rays $\ray xv$ and $\ray xw$ } if $v,w$ and $x$
are in general position and $\ray xy\cap\set{(1-t)v+tw}{t\in[0,1]}\neq\emptyset$, or,
$\ray xy =\ray xv = \ray xw$.

\section{The intrinsic metric on unit spheres in $\protect\R^{2}$\label{sec:R2-results}}

In this section we will prove our main results. Although somewhat
technical, our results follow mostly from elementary trigonometry and Euclidian
plane geometry.

Let $\norm{\cdot}_{E}$ denote the Euclidean norm on $\R^{2}$ and
let $\norm{\cdot}_{X}$ be any norm on $\R^{2}$ satisfying $\closedball E\subseteq\closedball X\subseteq K\closedball E$
for some $K\geq1$. A large part of our attention will be devoted
to proving that the map $\sigma:\sphere X\to\sphere E$ defined by
$\sigma(x):=x/\norm x_{E}$ is locally bi\hyp{}Lipschitz when both
$\sphere X$ and $\sphere E$ are both endowed with the Euclidean
induced metrics. Once this has been achieved through Lemmas~\ref{lem:estimate_segment}
and~\ref{lem:norm_decreasing}, a straightforward calculation will
prove our main results Theorems~\ref{thm:main-result1} and~\ref{thm:main-result2}.

\medskip

Let $\norm{\cdot}_{E}$ denote the Euclidean norm on $\R^{2}$ and
let $\norm{\cdot}_{X}$ be any other norm on $\R^{2}$ satisfying
$\closedball E\subseteq\closedball X$. We will first relate points
on $\sphere X$ to lines tangent to $\sphere E$. Specifically, for
any point $x\in\sphere X$ that is not in $\sphere E$, the two lines
through $x$ that are tangent to $\sphere E$ are such that points
in $\sphere X$ ``close to'' $x$ are ``wedged between'' the tangent
lines. Also, if $x\in\sphere X\cap\sphere E$, then the whole of $\sphere X$
lies on the same side of the line $\set{y\in\R^{2}}{\duality xy=1}.$

Let $x\in\R^{2}\setminus\openball E$, and let $\tau(x)\in\sphere E$
be a point on a tangent line to $\sphere X$ through $x$. Then the
angle between $r_{0,x}$ and $r_{0,\tau(x)}$ equals $\arccos(\norm x_{E}^{-1})$.
Let $x^{\perp}\in\sphere X\cap\{x\}^{\perp}$ be such that $\duality{\tau(x)}{x^{\bot}}\geq0$.
If we now define
\[
a(x):=\cos\left(\arccos\left(\frac{1}{\norm x_{E}}\right)\right)\frac{x}{\norm x_{E}}=\frac{x}{\norm x_{E}^{2}}
\]
and
\[
b(x):=\sin\left(\arccos\left(\frac{1}{\norm x_{E}}\right)\right)x^{\perp}=\sqrt{1-\frac{1}{\norm x_{E}^{2}}}x^{\perp},
\]
then $\tau(x)=a(x)+b(x)$.
\begin{lem}
\label{lem:tangent-lines} Let $\duality{\cdot}{\cdot}$ be an inner
product and $\norm{\cdot}_{E}$ be the associated Euclidean norm on
$\R^{2}$. Let $\norm{\cdot}_{X}$ be any other norm on $\R^{2}$
such that $\closedball E\subseteq\closedball X$. For all $x\in\sphere X$,
\begin{enumerate}
\item For all $t\in\R$, $\duality{tx+(1-t)\tau(x)}{\tau(x)}=1.$
\item For all $t\in[0,1]$, $\norm{tx+(1-t)\tau(x)}_{X}\leq1$.
\item For all $t>1$, $\norm{tx+(1-t)\tau(x)}_{X}\geq1$.
\item If $x\in\sphere X\cap\sphere E$ and $y\in\sphere X$, then $\duality xy\leq1$.
\end{enumerate}
\end{lem}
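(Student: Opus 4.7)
My approach is to handle items~(1)--(3) by direct computation and convexity of $\closedball X$, and to obtain~(4) from a supporting-functional argument combined with the equality case of the Cauchy--Schwarz inequality.

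For~(1), I would first note that $x$ and $x^\perp$ are orthogonal by construction, so $\duality{x}{\tau(x)} = \duality{x}{a(x)} = 1$, while the orthogonal decomposition $\tau(x) = a(x) + b(x)$ together with the explicit formulae for $a(x)$ and $b(x)$ gives
\[
\duality{\tau(x)}{\tau(x)} = \norm{a(x)}_E^2 + \norm{b(x)}_E^2 = \frac{1}{\norm{x}_E^2} + \left(1 - \frac{1}{\norm{x}_E^2}\right) = 1.
\]
Expanding $\duality{tx+(1-t)\tau(x)}{\tau(x)}$ linearly in $t$ then yields the constant value $1$; geometrically, this merely says that the line through $x$ and $\tau(x)$ is the tangent line to $\sphere E$ at $\tau(x)$.

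Item~(2) is then immediate from convexity: both $x \in \sphere X \subseteq \closedball X$ and $\tau(x) \in \sphere E \subseteq \closedball E \subseteq \closedball X$ lie in the convex body $\closedball X$, so every convex combination of them does too. For~(3), given $t > 1$ I would set $y := tx+(1-t)\tau(x)$ and rewrite $x = \frac{1}{t}\,y + \frac{t-1}{t}\,\tau(x)$ as a genuine convex combination (both coefficients are in $(0,1)$ since $t>1$); if $\norm{y}_X < 1$ were to hold, the triangle inequality would give $\norm{x}_X < 1$, contradicting $x\in\sphere X$.

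The main obstacle is~(4), where I must upgrade the inclusion $\closedball E \subseteq \closedball X$ to a sharp half-plane bound on all of $\closedball X$. Since $\closedball X$ is a convex body with $0$ in its interior and $x$ is a boundary point, an elementary planar separation argument yields a supporting functional at $x$: a vector $u \in \R^2$ with $\duality{z}{u} \leq 1$ for every $z \in \closedball X$ and $\duality{x}{u} = 1$. Applying the inequality to every $z \in \closedball E$ shows that $\norm{u}_E \leq 1$. Combining this with $\duality{x}{u} = 1$ and $\norm{x}_E = 1$ saturates the Cauchy--Schwarz inequality, which forces $u = x$. Consequently $\duality{y}{x} = \duality{y}{u} \leq 1$ for every $y \in \closedball X$, and in particular for every $y \in \sphere X$, which is exactly the claim of~(4).
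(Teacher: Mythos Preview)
Your proof is correct.  Items~(1) and~(2) are handled exactly as in the paper.  For~(3) and~(4), however, your arguments differ from the paper's and are in fact more streamlined.

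For~(3), the paper argues via the intermediate value theorem to locate a second parameter $t_0\leq 0$ with $\norm{t_0x+(1-t_0)\tau(x)}_X=1$, and then invokes convexity of the function $t\mapsto\norm{tx+(1-t)\tau(x)}_X$ to rule out a dip below~$1$ beyond $t=1$.  Your rewriting of $x$ as a convex combination of $y=tx+(1-t)\tau(x)$ and $\tau(x)$, followed by a single application of the triangle inequality, avoids the IVT entirely and gets to the same conclusion in one line.

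For~(4), the paper proceeds by a direct geometric contradiction: assuming $\duality{x}{y}>1$, it parameterizes the line through $x$ and $y$, finds its second intersection with $\sphere E$, uses strict convexity of the Euclidean ball to force a midpoint strictly inside $\closedball X$, and then expresses $x$ as a convex combination involving that midpoint to contradict $\norm{x}_X=1$.  Your argument instead takes a supporting functional $u$ to $\closedball X$ at~$x$, observes that $\closedball E\subseteq\closedball X$ forces $\norm{u}_E\leq 1$, and then uses the equality case of Cauchy--Schwarz (since $\duality{x}{u}=1=\norm{x}_E$) to identify $u=x$.  This is more conceptual and immediately generalizes beyond the plane, whereas the paper's computation is tailored to $\R^2$; on the other hand, the paper's approach is fully self-contained and does not appeal to the supporting hyperplane theorem.
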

\begin{proof}
We prove (1). Let $x\in\sphere X$. For all $t\in\R$,
\begin{eqnarray*}
\duality{tx+(1-t)\tau(x)}{\tau(x)} & = & t\duality x{\tau(x)}+(1-t)\duality{\tau(x)}{\tau(x)}\\
 & = & t\frac{\duality xx}{\norm x_{E}^{2}}+(1-t)\\
 & = & t+(1-t)\\
 & = & 1.
\end{eqnarray*}

We prove (2). Let $x\in\sphere X$. Since $\tau(x)\in\sphere E\subseteq\closedball X$,
and $\closedball X$ is convex, the result follows.

We prove (3). Let $x\in\sphere X$. Since $\norm x_{X}=1$, if $\tau(x)=x$,
then, $\norminline{tx+(1-t)\tau(x)}_{X}=1$, and the result is trivial.
We therefore assume $\tau(x)\neq x$. Since $\tau(x)\in\sphere E$,
so that$\norm{\tau(x)}_{X}\leq1$, by the reverse triangle inequality
and intermediate value theorem there exists some $t_{0}\leq0$ such
that $1=\norminline{t_{0}x+(1-t_{0})\tau(x)}_{X}=\norminline{1x+(1-1)\tau(x)}_{X}$
(here we used $\tau(x)\neq x$). Since the map $t\mapsto\norminline{tx+(1-t)\tau(x)}_{X}$
is convex, we cannot have that $\norminline{tx+(1-t)\tau(x)}_{X}<1$
for any $t>1$, as this would contradict $1=\norminline{t_{0}x+(1-t_{0})\tau(x)}_{X}=\norminline{1x+(1-1)\tau(x)}_{X}$.
We conclude that $\norminline{tx+(1-t)\tau(x)}_{X}\geq1$ for all
$t>1$.

We prove (4). Let $x\in\sphere X\cap\sphere E$ and $y\in\sphere X$,
but suppose $\duality xy>1$.

If $y$ and $x$ are linearly dependent, then $\norm y_{X}>1$, contradicting
$y\in\sphere X$, and we therefore may assume that $y$ and $x$ are
linearly independent.

Let $L$ denote the line through $x$ and $y$, parameterized by the
affine map $\eta(t):=(1-t)y+tx$ for $t\in\R$. The line $L$ is not
tangent to $\sphere E$ (else we would have $\duality xy=1$). Therefore
$L$ intersects $\sphere E$ in two distinct points, one being $x$;
let $t_{0}\in\R$ be such that $\eta(t_{0})\in\sphere E\cap L$ is
the other. We must have $t_{0}>1$, since $1<\duality x{\eta(t)}$
for $t\in[0,1)$. Since $\eta$ is an affine map and $(\R^{2},\norm{\cdot}_{E})$
is a strictly convex space,
\begin{eqnarray*}
\norm{\eta\parenth{\frac{1+t_{0}}{2}}}_{X} & = & \norm{\frac{\eta(t_{0})+\eta(1)}{2}}_{X}\\
 & \leq & \norm{\frac{\eta(t_{0})+\eta(1)}{2}}_{E}\\
 & < & 1.
\end{eqnarray*}

Let $\lambda:=2(1+t_{0})^{-1}\in(0,1)$, so that $\lambda\parenth{\frac{1+t_{0}}{2}}+(1-\lambda)0=1$.
Then, again since $\eta$ is affine,
\begin{eqnarray*}
1 & = & \norm x_{X}\\
 & = & \norm{\eta(1)}_{X}\\
 & = & \norm{\eta\parenth{\lambda\parenth{\frac{1+t_{0}}{2}}+(1-\lambda)0}}_{X}\\
 & = & \norm{\lambda\eta\parenth{\frac{1+t_{0}}{2}}+(1-\lambda)\eta(0)}_{X}\\
 & \leq & \lambda\norm{\eta\parenth{\frac{1+t_{0}}{2}}}_{X}+(1-\lambda)\norm{\eta(0)}_{X}\\
 & = & \lambda\norm{\eta\parenth{\frac{1+t_{0}}{2}}}_{X}+(1-\lambda)\norm y_{X}\\
 & < & \lambda\cdot1+(1-\lambda)\cdot1\\
 & = & 1,
\end{eqnarray*}
which is absurd. We conclude that $\duality xy\leq1$ for all $x\in\sphere X\cap\sphere E$
and all ${y\in\sphere X}$.
\end{proof}
Next, we show that for points $x,y\in\sphere X$ that are ``sufficiently
close'', the size of the angle formed by the rays $\ray 0x$ and $\ray 0y$
bounds the size of the acute angle formed by the ray $\ray xy$ and
the perpendicular line to $\ray 0x$ through $x$.

\begin{lem}
\label{cor:angles} Let $\norm{\cdot}_{E}$ denote the Euclidean norm
on $\R^{2}$ and $\norm{\cdot}_{X}$ be any norm on $\R^{2}$ such
that $\closedball E\subseteq\closedball X$. Let $x,y\in\sphere X$
and let $x^{\bot}\in\sphere E\cap\{x\}^{\bot}$ be such that $\duality{x^{\bot}}y\geq0$
and define $v:=x+x^{\bot}$. If $K\geq1$ and $x,y\in\sphere X$ are
such that $\norm x_{E}\leq K$ and the size of the angle between the
rays $\ray 0x$ and $\ray 0y$ is at most $\arccos(K^{-1})$, then
$\alpha$, the size of the angle between the rays $\ray xv$ and $\ray xy$,
is also at most $\arccos(K^{-1})$.\end{lem}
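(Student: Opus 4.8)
The plan is to set up an orthonormal frame adapted to $x$, reduce the assertion to one scalar inequality, and then split that inequality into two one\hyp{}sided bounds controlled by opposite convexity inputs. First I would take coordinates with $e_{1}:=x/\norm x_{E}$ and $e_{2}:=x^{\bot}$, so that $x=(R,0)$ and $y=(y_{1},y_{2})$ with $R:=\norm x_{E}\le K$ and $y_{2}=\duality{x^{\bot}}y\ge0$. As $\norm{x^{\bot}}_{E}=1$ and $\duality{x^{\bot}}x=0$, we have $\cos\alpha=\duality{x^{\bot}}{y-x}/\norm{y-x}_{E}=y_{2}/\norm{y-x}_{E}\in[0,1]$, so $\alpha\in[0,\pi/2]$; writing $\Phi:=\arccos(K^{-1})$, the conclusion $\alpha\le\Phi$ is therefore equivalent to $\cos\alpha\ge K^{-1}$, i.e.\ (both sides being nonnegative, after squaring) to
\[
(y_{1}-R)^{2}\le(K^{2}-1)\,y_{2}^{2}.
\]
I would deduce this from the two separate inequalities $y_{1}-R\le\sqrt{K^{2}-1}\,y_{2}$ and $R-y_{1}\le\sqrt{K^{2}-1}\,y_{2}$; geometrically these assert that $y$ lies in the closed cone with apex $x$, axis $\ray xv$ and half\hyp{}angle $\Phi$.

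For $y_{1}-R\le\sqrt{K^{2}-1}\,y_{2}$ (the bound preventing $y$ from straying outward) I would use a supporting line $\ell$ of $\closedball X$ at $x$. Since $\closedball E\subseteq\closedball X$ lies on the inner side of $\ell$, the line cannot meet $\openball E$, so the tangent\hyp{}to\hyp{}circle computation recorded just before Lemma~\ref{lem:tangent-lines} shows that $\ell$ makes an angle at most $\arccos(R^{-1})$ with $\ray xv$. As $y\in\closedball X$ lies on the inner side of $\ell$, the direction $y-x$ is confined to the corresponding half\hyp{}plane of directions, which reads off as $y_{1}-R\le\sqrt{R^{2}-1}\,y_{2}\le\sqrt{K^{2}-1}\,y_{2}$, the last step using $R\le K$. (For $R=1$ this recovers Lemma~\ref{lem:tangent-lines}(4).)

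The inequality $R-y_{1}\le\sqrt{K^{2}-1}\,y_{2}$ is the crux, and here both $\norm x_{E}\le K$ and the hypothesis that $\theta$, the angle between $\ray 0x$ and $\ray 0y$, is at most $\Phi$ must be used. The key point is that $\closedball X$ contains the convex hull of $\closedball E$ and $x$, so---$\closedball X$ being star\hyp{}shaped about the origin---$\norm y_{E}$ is at least the radius at which $\ray 0y$ exits that hull. For $\theta\le\arccos(R^{-1})$ this exit point lies on the segment from $x$ to $\tau(x)$, i.e.\ on the tangent to $\sphere E$ at $\tau(x)$, giving $\norm y_{E}\ge 1/\cos\!\big(\theta-\arccos(R^{-1})\big)$, while for $\arccos(R^{-1})<\theta\le\Phi$ it suffices that $\norm y_{E}\ge1$. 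Rewriting the desired inequality as $\norm y_{E}\,\big(\cos\theta+\sqrt{K^{2}-1}\,\sin\theta\big)\ge R$ and invoking the identity $\cos\theta+\sqrt{K^{2}-1}\,\sin\theta=K\cos(\theta-\Phi)$, either estimate reduces the claim to $K\ge R=\norm x_{E}$, which is exactly the hypothesis.

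The main obstacle is precisely this last bound. It is sharp---for $\norm{\cdot}_{\infty}$ on $\R^{2}$ with $x$ a corner one finds $\alpha=\Phi$ all along an edge---so any lossy argument fails; for instance, bounding the triangle with vertices $0,x,y$ by supporting lines at both $x$ and $y$ only controls the relevant angle up to $2\Phi$. What recovers the sharp half\hyp{}angle $\Phi$ is replacing the supporting line at $y$ by the convex\hyp{}hull (equivalently, tangent\hyp{}line) lower bound on $\norm y_{E}$ above, combined with the genuinely one\hyp{}sided hypothesis $\norm x_{E}\le K$ rather than any two\hyp{}sided control of $\closedball X$.
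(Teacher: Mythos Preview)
Your argument is correct and follows a genuinely different route from the paper's proof. Where the paper argues synthetically---introducing the two tangent points $\tau_1(x),\tau_2(x)$ on $\sphere E$, the auxiliary point $u\in\sphere E$ at angle $\beta$, its reflected partner $w$, and then invoking Lemma~\ref{lem:tangent-lines}(2)--(4) to trap $y$ in the wedge they bound---you pass to coordinates adapted to $x$, reduce the claim to the single scalar inequality $(y_1-R)^2\le(K^2-1)y_2^2$, and split it into an outward and an inward half. For the outward half you use a supporting line of $\closedball X$ at $x$ directly, which is a cleaner replacement for the paper's Lemma~\ref{lem:tangent-lines}(3) (the latter obtains a comparable conclusion via convexity of the norm along the tangent line). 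For the inward half you recast the paper's use of Lemma~\ref{lem:tangent-lines}(2) as an explicit lower bound on $\norm y_E$ coming from the convex hull of $\closedball E\cup\{x\}$; the identity $\cos\theta+\sqrt{K^2-1}\sin\theta=K\cos(\theta-\Phi)$ then makes completely explicit that both cases collapse to $K\ge R=\norm x_E$, something the paper's projection step with $P_u x$ encodes more implicitly. The trade-off: your argument isolates precisely which hypothesis drives which side of the cone and exhibits the sharpness you note, at the cost of a case split on $\theta\lessgtr\arccos(R^{-1})$; the paper's proof is picture-driven and avoids that split, but leans more heavily on the preparatory Lemma~\ref{lem:tangent-lines}.
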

\begin{proof}
As a visual aid, the reader is referred to Figure~\ref{figure:angles}.

\customizepaperstyle{lms}{%
	\begin{figure}[t]
		\centering
\definecolor{uququq}{rgb}{0,0,0}
\begin{tikzpicture}[line cap=round,line join=round,x=3.0cm,y=3.0cm]
\clip(-1.1,-0.2) rectangle (2.7,2.1);
\fill[line width=0pt,fill=black,fill opacity=0.25] (1.74,3.44) -- (0,1.5) -- (0.75,0.67) -- (0.83,0.56) -- (0.89,0.46) -- (0.93,0.36) -- (0.98,0.2) -- (4.11,0.83) -- cycle;
\draw(0,0) circle (1);
\draw (0,1.5)-- (0,0);
\draw [dash pattern=on 2pt off 2pt,domain=-0.74535599249993:4.499989389412392] plot(\x,{(--1.12--0.83*\x)/0.75});
\draw [dash pattern=on 2pt off 2pt,domain=-3.982478398090284:0.7453559924999299] plot(\x,{(-1.12--0.83*\x)/-0.75});
\draw [rotate around={-35.33:(0,0)},dash pattern=on 2pt off 5pt] (0,0) ellipse (2.55 and 1.3);
\draw [domain=-3.98:4.5] plot(\x,{(--2.25-0*\x)/1.5});
\draw [domain=0.0:4.499989389412392] plot(\x,{(-0--0.32*\x)/1.58});
\draw (0,1.5)-- (1.22,0.69);
\draw (0.98,0.2)-- (0,1.5);
\draw (0,1.5)-- (0.29,0.06);
\draw [shift={(0,0)}] plot[domain=0.2:1.57,variable=\t]({1*0.18*cos(\t r)+0*0.18*sin(\t r)},{0*0.18*cos(\t r)+1*0.18*sin(\t r)});
\draw [shift={(0,1.5)}] plot[domain=-1.35:0,variable=\t]({1*0.4*cos(\t r)+0*0.4*sin(\t r)},{0*0.4*cos(\t r)+1*0.4*sin(\t r)});

\draw [shift={(0,1.5)}] plot[domain=-0.59:0,variable=\t]({1*0.68*cos(\t r)+0*0.68*sin(\t r)},{0*0.68*cos(\t r)+1*0.68*sin(\t r)});
\draw (0.67,1.41) node[anchor=north west] {$ \alpha $};

\draw (1.0,0.02) node[anchor=north west] {$  \mathbf{S}_E$};
\draw (1.7,0.3) node[anchor=north west] {$  \mathbf{S}_X$};

\draw (0.4,1.5) node[anchor=north west] {$ \beta $};
\draw (0.07,0.3) node[anchor=north west] {$ \beta $};

\fill [color=black] (0,0) circle (1.5pt);
\draw[color=black] (-0.06,-.06) node {$0$};
\fill [color=black] (0,1.5) circle (1.5pt);
\draw[color=black] (0.0,1.6) node {$x$};
\fill [color=black] (-0.75,0.67) circle (1.5pt);
\draw[color=black] (-0.9,0.75) node {$\tau_2(x)$};
\fill [color=black] (0.75,0.67) circle (1.5pt);
\draw[color=black] (0.92,0.69) node {$\tau_1(x)$};
\fill [color=black] (0.93,1.5) circle (1.5pt);
\draw[color=black] (0.99,1.53) node {$v$};
\fill [color=black] (1.22,0.69) circle (1.5pt);
\draw[color=black] (1.33,0.72) node {$y$};
\fill [color=black] (0.98,0.2) circle (1.5pt);
\draw[color=black] (1.1,0.14) node {$u$};
\fill [color=black] (0.29,0.06) circle (1.5pt);
\draw[color=black] (0.45,0.0) node {$P_u x$};
\fill [color=black] (0.46,2.02) circle (1.5pt);
\draw[color=black] (0.53,1.99) node {$w$};
\end{tikzpicture}\caption{}
		\label{figure:angles}
	\end{figure}%
}
\customizepaperstyle{ams}{%
	\begin{figure}[H]
		\centering
\definecolor{uququq}{rgb}{0,0,0}
\begin{tikzpicture}[line cap=round,line join=round,x=3.0cm,y=3.0cm]
\clip(-1.1,-0.2) rectangle (2.7,2.1);
\fill[line width=0pt,fill=black,fill opacity=0.25] (1.74,3.44) -- (0,1.5) -- (0.75,0.67) -- (0.83,0.56) -- (0.89,0.46) -- (0.93,0.36) -- (0.98,0.2) -- (4.11,0.83) -- cycle;
\draw(0,0) circle (1);
\draw (0,1.5)-- (0,0);
\draw [dash pattern=on 2pt off 2pt,domain=-0.74535599249993:4.499989389412392] plot(\x,{(--1.12--0.83*\x)/0.75});
\draw [dash pattern=on 2pt off 2pt,domain=-3.982478398090284:0.7453559924999299] plot(\x,{(-1.12--0.83*\x)/-0.75});
\draw [rotate around={-35.33:(0,0)},dash pattern=on 2pt off 5pt] (0,0) ellipse (2.55 and 1.3);
\draw [domain=-3.98:4.5] plot(\x,{(--2.25-0*\x)/1.5});
\draw [domain=0.0:4.499989389412392] plot(\x,{(-0--0.32*\x)/1.58});
\draw (0,1.5)-- (1.22,0.69);
\draw (0.98,0.2)-- (0,1.5);
\draw (0,1.5)-- (0.29,0.06);
\draw [shift={(0,0)}] plot[domain=0.2:1.57,variable=\t]({1*0.18*cos(\t r)+0*0.18*sin(\t r)},{0*0.18*cos(\t r)+1*0.18*sin(\t r)});
\draw [shift={(0,1.5)}] plot[domain=-1.35:0,variable=\t]({1*0.4*cos(\t r)+0*0.4*sin(\t r)},{0*0.4*cos(\t r)+1*0.4*sin(\t r)});

\draw [shift={(0,1.5)}] plot[domain=-0.59:0,variable=\t]({1*0.68*cos(\t r)+0*0.68*sin(\t r)},{0*0.68*cos(\t r)+1*0.68*sin(\t r)});
\draw (0.67,1.41) node[anchor=north west] {$ \alpha $};

\draw (1.0,0.02) node[anchor=north west] {$  \mathbf{S}_E$};
\draw (1.7,0.3) node[anchor=north west] {$  \mathbf{S}_X$};

\draw (0.4,1.5) node[anchor=north west] {$ \beta $};
\draw (0.07,0.3) node[anchor=north west] {$ \beta $};

\fill [color=black] (0,0) circle (1.5pt);
\draw[color=black] (-0.06,-.06) node {$0$};
\fill [color=black] (0,1.5) circle (1.5pt);
\draw[color=black] (0.0,1.6) node {$x$};
\fill [color=black] (-0.75,0.67) circle (1.5pt);
\draw[color=black] (-0.9,0.75) node {$\tau_2(x)$};
\fill [color=black] (0.75,0.67) circle (1.5pt);
\draw[color=black] (0.92,0.69) node {$\tau_1(x)$};
\fill [color=black] (0.93,1.5) circle (1.5pt);
\draw[color=black] (0.99,1.53) node {$v$};
\fill [color=black] (1.22,0.69) circle (1.5pt);
\draw[color=black] (1.33,0.72) node {$y$};
\fill [color=black] (0.98,0.2) circle (1.5pt);
\draw[color=black] (1.1,0.14) node {$u$};
\fill [color=black] (0.29,0.06) circle (1.5pt);
\draw[color=black] (0.45,0.0) node {$P_u x$};
\fill [color=black] (0.46,2.02) circle (1.5pt);
\draw[color=black] (0.53,1.99) node {$w$};
\end{tikzpicture}\caption{}
		\label{figure:angles}
	\end{figure}%
}

Let $\beta:=\arccos(K^{-1})$ and $u\in\sphere E$ be such that $\duality{x^{\bot}}u>0$
and that the size of the angle formed by the rays $\ray 0x$ and $\ray 0u$
equals $\beta$ (i.e., $\duality xu=\norm x_{E}\cos\beta$). Let $\tau_{1}(x),\tau_{2}(x)\in\sphere E$
be the point(s) on the lines through $x$ that are tangent to $\sphere E$,
such that $\duality{x^{\bot}}{\tau_{1}(x)}\geq0$. Let
\[
w:=\begin{cases}
v & x=\tau_{1}(x)=\tau_{2}(x)\\
2x-\tau_{2}(x) & \mbox{otherwise,}
\end{cases}
\]
so that $w\in\ray{\tau_{2}(x)}x$ is distinct from $x$, and is such
that $\ray xw\subseteq\ray{\tau_{2}(x)}x$. Let $P_{u}$ denote the
orthogonal projection onto the span of $u$. Then size of the angle
formed between the rays $\ray x{P_{u}x}$ and $\ray xv$ is exactly
$\beta$. Since $\norm x_{E}\leq K$, the point $P_{u}x$ lies on
the line segment $\set{tu}{t\in(0,1]}$ (if $\norm x_{E}=K$, then
$P_{u}x=u=\tau_{1}(x)$), and therefore the size of angle between
rays $\ray xu$ and $\ray xv$ is at most $\beta$. Since $\ray x{\tau_{1}(x)}$
is between the rays $\ray xu$ and $\ray xv$, and since $\ray xv$
bisects the angle formed by the rays $\ray x{\tau_{1}(x)}$ and $\ray xw$,
the size of the angle formed by $r_{x,v}$ and $\ray xw$ is also
at most $\beta$.
Finally, by Lemma~\ref{lem:tangent-lines} (2),(3) and (4) and the fact that $\closedball E\subseteq\closedball X$, the ray $\ray xy$
lies either between the rays $\ray xu$ and $\ray xv$ or the rays
$\ray xv$ and $\ray xw$ (The point $y$ can only lie in the shaded
area in Figure~\ref{figure:angles}).
We conclude that $\alpha$,
the size of the angle between rays $\ray xv$ and $\ray xy$, is at
most $\beta=\arccos(K^{-1})$.\end{proof}
\begin{lem}
\label{lem:estimate_segment} Let $\norm{\cdot}_{E}$ denote the Euclidean
norm on $\R^{2}$ and $\norm{\cdot}_{X}$ be any norm on $\R^{2}$
such that $\closedball E\subseteq\closedball X\subseteq K\closedball E$
for some $K\geq1$. If $x,y\in\sphere X$ is such that $\theta$,
the size of angle between the rays $r_{0,x}$ and $r_{0,y}$, is at
most $\arccos(K^{-1})$, then
\[
\norm{x-y}_{E}\leq K^{2}\norm{\frac{x}{\norm x_{E}}-\frac{y}{\norm y_{E}}}_{E}.
\]
\end{lem}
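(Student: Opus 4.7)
The plan is to reduce the inequality to a direct trigonometric calculation in a well-chosen orthonormal frame, using Lemma~\ref{cor:angles} as the key geometric ingredient. Since $\closedball E\subseteq\closedball X\subseteq K\closedball E$, both $r_x:=\norm x_E$ and $r_y:=\norm y_E$ lie in $[1,K]$. The case $\theta=0$ forces $x=y$ and is trivial. Otherwise, after rotating (which preserves $\norm{\cdot}_E$) and reflecting if necessary (which affects neither $\norm{x-y}_E$ nor $\norm{\sigma(x)-\sigma(y)}_E$), I may assume $x=(0,r_x)$, the unit normal $x^\bot=(1,0)$ satisfies $\duality{x^\bot}{y}\geq 0$, and $y=(r_y\sin\theta,r_y\cos\theta)$ with $\sin\theta>0$.

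With $v:=x+x^\bot$ as in Lemma~\ref{cor:angles}, the ray $\ray xv$ points in the direction $(1,0)$. Computing the cosine of the angle $\alpha$ between $\ray xv$ and $\ray xy$ directly from the inner product yields
\[
\cos\alpha=\frac{\duality{x^\bot}{y-x}}{\norm{x-y}_E}=\frac{r_y\sin\theta}{\norm{x-y}_E}.
\]
The hypotheses of Lemma~\ref{cor:angles} are met (since $\norm x_E\leq K$ and the angle $\theta$ between $\ray 0x$ and $\ray 0y$ is at most $\arccos(K^{-1})$), so $\alpha\leq\arccos(K^{-1})$, i.e.\ $\cos\alpha\geq K^{-1}$. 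Rearranging gives $\norm{x-y}_E\leq Kr_y\sin\theta$.

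To finish, observe that $\sigma(x)$ and $\sigma(y)$ both lie on $\sphere E$ and still subtend angle $\theta$ at the origin, so $\norm{\sigma(x)-\sigma(y)}_E=2\sin(\theta/2)$. Using $r_y\leq K$ together with the half-angle bound $\sin\theta=2\sin(\theta/2)\cos(\theta/2)\leq 2\sin(\theta/2)$, one obtains
\[
\norm{x-y}_E\leq Kr_y\sin\theta\leq K\cdot K\cdot 2\sin(\theta/2)=K^2\norm{\sigma(x)-\sigma(y)}_E,
\]
which is the desired inequality. The substantive geometric work is packaged in Lemma~\ref{cor:angles}; once that is in hand, the present argument is essentially bookkeeping, and I anticipate no obstacle beyond confirming that the rotation/reflection normalization is harmless and that the degenerate case $\theta=0$ is disposed of separately.
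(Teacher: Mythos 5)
Your proof is correct and follows essentially the same route as the paper's: both rest on Lemma~\ref{cor:angles} to get $\cos\alpha\geq K^{-1}$, extract one factor of $K$ from $\norm y_{E}\leq K$, and compare $\sin\theta$ with the unit-circle chord $2\sin(\theta/2)$. The paper packages the computation via orthogonal projections and the closest-point property where you use explicit coordinates and the half-angle identity, but the underlying identity $\norm{x-y}_{E}\cos\alpha=\norm y_{E}\sin\theta$ and the resulting bound are identical.
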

\begin{proof}
As a visual aid, the reader is referred to Figure~\ref{figure:norm-bounf-figure}.

\customizepaperstyle{ams}{%
	\begin{figure}[H]
	\centering
	\begin{tikzpicture}[line cap=round,line join=round,x=2cm,y=2cm]
\clip(-1,-.5) rectangle (3.5,3.5);
\draw [shift={(0,0)}] plot[domain=0:3.141,variable=\t]({1*1*cos(\t r)+0*1*sin(\t r)},{0*1*cos(\t r)+1*1*sin(\t r)});
\draw [shift={(0,0)}] plot[domain=0:3.141,variable=\t]({1*3.5*cos(\t r)+0*3.5*sin(\t r)},{0*3.5*cos(\t r)+1*3.5*sin(\t r)});
\draw [rotate around={-45:(0,0)},dash pattern=on 2pt off 5pt] (0,0) ellipse (3.04 and 2.17);
\draw (0,2.5)-- (1.56,1.51);
\draw [domain=-4.82:6.87] plot(\x,{(-2.5-0*\x)/-1});
\draw (1.56,1.51)-- (0,0);
\draw (0,1)-- (1.03,1);
\draw (0,1.51)-- (1.56,1.51);
\draw (0,1)-- (0.72,0.7);
\draw (0.5,0.48)-- (0,1);
\draw (0,0)-- (0,2.5);
\draw [shift={(0,2.5)}] plot[domain=-0.56:0,variable=\t]({1*0.48*cos(\t r)+0*0.48*sin(\t r)},{0*0.48*cos(\t r)+1*0.48*sin(\t r)});
\draw [shift={(0,0)}] plot[domain=0.77:1.57,variable=\t]({1*0.31*cos(\t r)+0*0.31*sin(\t r)},{0*0.31*cos(\t r)+1*0.31*sin(\t r)});

\draw [shift={(1.56,1.51)}] plot[domain=2.58:3.141,variable=\t]({1*0.43*cos(\t r)+0*0.43*sin(\t r)},{0*0.43*cos(\t r)+1*0.43*sin(\t r)});


\draw (0.05,0.55) node[anchor=north west] {$\theta$};

\draw (1.0,0.22) node[anchor=north west] {$ \mathbf{S}_E $};
\draw (2.95,0.63) node[anchor=north west] {$ K \mathbf{S}_E $};
\draw (2.1,0.96) node[anchor=north west] {$ \mathbf{S}_X $};

\draw (0.45,2.5) node[anchor=north west] {$\alpha$};
\draw (0.90,1.75) node[anchor=north west] {$\alpha$};

\fill [color=black] (0,2.5) circle (1.5pt);
\draw[color=black] (0.13,2.6) node {$x$};
\fill [color=black] (1,2.5) circle (1.5pt);
\draw[color=black] (1.13,2.6) node {$v$};
\fill [color=black] (1.56,1.51) circle (1.5pt);
\draw[color=black] (1.74,1.54) node {$y$};
\fill [color=black] (0,0) circle (1.5pt);
\draw[color=black] (-0.05,-0.18) node {$0$};
\fill [color=black] (0,1) circle (1.5pt);
\draw[color=black] (-0.3,1.15) node {$x/\|x\|$};
\fill [color=black] (1.03,1) circle (1.5pt);
\draw[color=black] (1.25,1.0) node {$\lambda y$};
\fill [color=black] (0,1.51) circle (1.5pt);
\draw[color=black] (-0.23,1.48) node {$P_x y$};
\fill [color=black] (0.5,0.48) circle (1.5pt);
\draw[color=black] (0.6,0.35) node {$u$};
\fill [color=black] (0.72,0.7) circle (1.5pt);
\draw[color=black] (1.1,0.7) node {$y/\|y\|$};
\end{tikzpicture}\caption{}
	\label{figure:norm-bounf-figure}
	\end{figure}%
}
\customizepaperstyle{lms}{%
	\begin{figure}[b]
	\centering
	\begin{tikzpicture}[line cap=round,line join=round,x=2cm,y=2cm]
\clip(-1,-.5) rectangle (3.5,3.5);
\draw [shift={(0,0)}] plot[domain=0:3.141,variable=\t]({1*1*cos(\t r)+0*1*sin(\t r)},{0*1*cos(\t r)+1*1*sin(\t r)});
\draw [shift={(0,0)}] plot[domain=0:3.141,variable=\t]({1*3.5*cos(\t r)+0*3.5*sin(\t r)},{0*3.5*cos(\t r)+1*3.5*sin(\t r)});
\draw [rotate around={-45:(0,0)},dash pattern=on 2pt off 5pt] (0,0) ellipse (3.04 and 2.17);
\draw (0,2.5)-- (1.56,1.51);
\draw [domain=-4.82:6.87] plot(\x,{(-2.5-0*\x)/-1});
\draw (1.56,1.51)-- (0,0);
\draw (0,1)-- (1.03,1);
\draw (0,1.51)-- (1.56,1.51);
\draw (0,1)-- (0.72,0.7);
\draw (0.5,0.48)-- (0,1);
\draw (0,0)-- (0,2.5);
\draw [shift={(0,2.5)}] plot[domain=-0.56:0,variable=\t]({1*0.48*cos(\t r)+0*0.48*sin(\t r)},{0*0.48*cos(\t r)+1*0.48*sin(\t r)});
\draw [shift={(0,0)}] plot[domain=0.77:1.57,variable=\t]({1*0.31*cos(\t r)+0*0.31*sin(\t r)},{0*0.31*cos(\t r)+1*0.31*sin(\t r)});

\draw [shift={(1.56,1.51)}] plot[domain=2.58:3.141,variable=\t]({1*0.43*cos(\t r)+0*0.43*sin(\t r)},{0*0.43*cos(\t r)+1*0.43*sin(\t r)});


\draw (0.05,0.55) node[anchor=north west] {$\theta$};

\draw (1.0,0.22) node[anchor=north west] {$ \mathbf{S}_E $};
\draw (2.95,0.63) node[anchor=north west] {$ K \mathbf{S}_E $};
\draw (2.1,0.96) node[anchor=north west] {$ \mathbf{S}_X $};

\draw (0.45,2.5) node[anchor=north west] {$\alpha$};
\draw (0.90,1.75) node[anchor=north west] {$\alpha$};

\fill [color=black] (0,2.5) circle (1.5pt);
\draw[color=black] (0.13,2.6) node {$x$};
\fill [color=black] (1,2.5) circle (1.5pt);
\draw[color=black] (1.13,2.6) node {$v$};
\fill [color=black] (1.56,1.51) circle (1.5pt);
\draw[color=black] (1.74,1.54) node {$y$};
\fill [color=black] (0,0) circle (1.5pt);
\draw[color=black] (-0.05,-0.18) node {$0$};
\fill [color=black] (0,1) circle (1.5pt);
\draw[color=black] (-0.3,1.15) node {$x/\|x\|$};
\fill [color=black] (1.03,1) circle (1.5pt);
\draw[color=black] (1.25,1.0) node {$\lambda y$};
\fill [color=black] (0,1.51) circle (1.5pt);
\draw[color=black] (-0.23,1.48) node {$P_x y$};
\fill [color=black] (0.5,0.48) circle (1.5pt);
\draw[color=black] (0.6,0.35) node {$u$};
\fill [color=black] (0.72,0.7) circle (1.5pt);
\draw[color=black] (1.1,0.7) node {$y/\|y\|$};
\end{tikzpicture}\caption{}
	\label{figure:norm-bounf-figure}
	\end{figure}%
}

Let $x^{\bot}\in\sphere E\cap\{x\}^{\bot}$ be such that $\duality{x^{\bot}}y>0$
and define $v:=x+x^{\bot}.$ Let $P_{x}$ and $P_{y}$ be the orthogonal
projections onto the span of $x$ and $y$ respectively. Define $u:=P_{y}(x/\norm x_{E})$,
and $\lambda:=\norm{P_{x}y}_{E}^{-1}$ so that $P_{x}(\lambda y)=x/\norm x_{E}$.
Let $\alpha$ denote the size of the angle formed by the rays $\ray xy$
and $\ray xv$. We note that then size of the angle formed between
the rays $\ray{x/\norm x_{E}}{\lambda y}$ and $\ray{x/\norm x_{E}}u$
also equals~$\theta$. Elementary trigonometry will establish
\begin{eqnarray*}
\norm{x-y}_{E} & = & \frac{1}{\cos\alpha}\norm{y-P_{x}y}_{E}\\
 & = & \frac{1}{\lambda\cos\alpha}\norm{\lambda y-P_{x}(\lambda y)}_{E}\\
 & = & \frac{1}{\lambda\cos\alpha}\norm{\lambda y-\frac{x}{\norm x_{E}}}_{E}\\
 & = & \frac{1}{\lambda\cos\alpha\cos\theta}\norm{u-\frac{x}{\norm x_{E}}}_{E}.
\end{eqnarray*}

Now we note that $\norm{u-\frac{x}{\norm x}}_{E}\leq\norm{\frac{y}{\norm y_{E}}-\frac{x}{\norm x}}_{E}$,
since $u$ the is the closest point (with respect to $\norm{\cdot}_{E}$)
in the span of $y$ to the point $x/\norm x_{E}$. Also, by Lemma~\ref{cor:angles}
we have $\alpha\leq\arccos(K^{-1})$, so that $\cos\alpha\geq K^{-1}$.
Furthermore $\lambda^{-1}=\norm{P_{x}y}_{E}=\norm y_{E}\cos\theta\leq K\cos\theta$.
Finally we conclude\belowdisplayskip=-12pt
\begin{eqnarray*}
\norm{x-y}_{E} & = & \frac{1}{\lambda\cos\alpha\cos\theta}\norm{u-\frac{x}{\norm x_{E}}}_{E}\\
 & \leq & \frac{K\cos\theta}{\cos\alpha\cos\theta}\norm{\frac{y}{\norm y_{E}}-\frac{x}{\norm x}}_{E}\\
 & = & K^{2}\norm{\frac{y}{\norm y_{E}}-\frac{x}{\norm x}}_{E}.
\end{eqnarray*}\qedhere
\end{proof}

\begin{lem}
\label{lem:norm_decreasing} Let $\norm{\cdot}_{E}$ denote the Euclidean
norm on $\R^{2}$ and $\norm{\cdot}_{X}$ be any norm on $\R^{2}$
such that $\closedball E\subseteq\closedball X$. If $x,y\in\sphere X$,
then
\[
\norm{\frac{x}{\norm x_{E}}-\frac{y}{\norm y_{E}}}_{E}\leq\norm{x-y}_{E}.
\]
\end{lem}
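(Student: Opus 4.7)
The plan is to reduce the claim to a polynomial inequality in $a:=\norm{x}_E$, $b:=\norm{y}_E$, and $\duality{x}{y}$, and to dispatch it using Cauchy--Schwarz together with $(a-b)^2\geq 0$.

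First I would record the standing numerical information: the hypothesis $\closedball{E}\subseteq\closedball{X}$ is equivalent to $\norm{z}_X\leq\norm{z}_E$ for every $z\in\R^2$, so for $x,y\in\sphere{X}$ one obtains $a=\norm{x}_E\geq\norm{x}_X=1$ and similarly $b\geq 1$; in particular $ab\geq 1$. Squaring both sides of the desired inequality and expanding via the inner product (using $\duality{x}{x}=a^2$ and $\duality{y}{y}=b^2$) yields
\[
\norm{\tfrac{x}{a}-\tfrac{y}{b}}_E^2 \;=\; 2-\tfrac{2}{ab}\duality{x}{y},\qquad \norm{x-y}_E^2 \;=\; a^2+b^2-2\duality{x}{y},
\]
so the claim is equivalent to the scalar inequality
\[
2\duality{x}{y}\,(ab-1)\;\leq\; ab\,(a^2+b^2-2).
\]

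I would then split into two cases. If $ab=1$ then $a=b=1$ by $a,b\geq 1$, so $x,y\in\sphere{E}$ and both sides of the original inequality coincide, giving equality. Otherwise $ab>1$, so the coefficient of $\duality{x}{y}$ on the left is strictly positive, and it suffices to bound $\duality{x}{y}$ from above by Cauchy--Schwarz, $\duality{x}{y}\leq\norm{x}_E\norm{y}_E=ab$. This reduces the problem to showing $2ab\,(ab-1)\leq ab\,(a^2+b^2-2)$, which after cancelling $ab>0$ rearranges to $2ab\leq a^2+b^2$, i.e.\ $(a-b)^2\geq 0$.

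The one conceptual point is recognising that $\closedball{E}\subseteq\closedball{X}$ forces $ab\geq 1$, which makes the sign of $ab-1$ favourable and turns Cauchy--Schwarz into a usable upper bound; beyond this the argument is purely algebraic bookkeeping, and there is no real geometric obstacle.
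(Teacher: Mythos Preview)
Your proof is correct. The reduction to the scalar inequality $2\duality{x}{y}(ab-1)\leq ab(a^2+b^2-2)$ is accurate, and since $ab-1\geq 0$ the Cauchy--Schwarz bound $\duality{x}{y}\leq ab$ together with $(a-b)^2\geq 0$ closes it cleanly; the case split at $ab=1$ is not even strictly necessary, as the same chain gives $0\leq 0$ there.

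The paper argues differently: it assumes without loss of generality $\norm{y}_E\geq\norm{x}_E\geq 1$, projects $x/\norm{x}_E$ orthogonally onto the span of $y$ to obtain a point $u$ with $\norm{u}_E\leq 1$, and then uses the Pythagorean decomposition
\[
\norm{\tfrac{x}{\norm{x}_E}-\tfrac{y}{\norm{y}_E}}_E^2=\norm{\tfrac{x}{\norm{x}_E}-u}_E^2+\norm{u-\tfrac{y}{\norm{y}_E}}_E^2
\]
together with the one-dimensional observation $\norm{u-y/\norm{y}_E}_E\leq\norm{u-y/\norm{x}_E}_E$ to arrive at $\norm{x-y}_E^2/\norm{x}_E^2\leq\norm{x-y}_E^2$. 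Both routes ultimately rest on $\norm{x}_E,\norm{y}_E\geq 1$ and Cauchy--Schwarz, but yours is a direct algebraic expansion while the paper's is a geometric picture via projections. Your version has the advantage of making transparent that nothing two-dimensional is used (the same algebra works in any inner-product space), whereas the paper's version comes with a figure and a more visual intuition.
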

\begin{proof}
As a visual aid we refer the reader to Figure~\ref{figure:euclidean-norm-bound-figure}.

\customizepaperstyle{ams}{%
	\begin{figure}[H]
	\centering
	\begin{tikzpicture}[line cap=round,line join=round,x=3.0cm,y=3.0cm]
\clip(-.5,-.2) rectangle (1.9,1.7);
\draw(0,0) circle (1);
\draw (0,0)-- (0,1.5);
\draw (0,0)-- (1,1.5);
\draw (0,1.5)-- (1,1.5);
\draw (0,1)-- (0.46,0.69);
\draw (0,1)-- (0.55,0.83);
\draw [rotate around={-146.12:(0.04,0.05)},dash pattern=on 2pt off 2pt] (0.04,0.05) ellipse (1.85 and 1.35);
\draw (0.67,1.01)-- (0,1);
\draw (1.01,0.3) node[anchor=north west] {$ \mathbf{S}_E $};
\draw (1.53,0.74) node[anchor=north west] {$ \mathbf{S}_X $};

\fill [color=black] (0,0) circle (1.5pt);
\draw[color=black] (-0.05,-0.07) node {$0$};
\fill [color=black] (0,1.5) circle (1.5pt);
\draw[color=black] (0.05,1.57) node {$x$};
\fill [color=black] (1,1.5) circle (1.5pt);
\draw[color=black] (1.07,1.55) node {$y$};
\fill [color=black] (0.55,0.83) circle (1.5pt);
\draw[color=black] (0.80,0.85) node {$y/\|y\|$};
\fill [color=black] (0,1) circle (1.5pt);
\draw[color=black] (-0.19,1.09) node {$x/\|x\|$};
\fill [color=black] (0.46,0.69) circle (1.5pt);
\draw[color=black] (0.55,0.65) node {$u$};
\fill [color=black] (0.67,1.01) circle (1.5pt);
\draw[color=black] (0.92,1.05) node {$y/\|x\|$};

\end{tikzpicture}\caption{}
	\label{figure:euclidean-norm-bound-figure}
	\end{figure}
}
\customizepaperstyle{lms}{%
	\begin{figure}[H]
	\centering
	\begin{tikzpicture}[line cap=round,line join=round,x=3.0cm,y=3.0cm]
\clip(-.5,-.2) rectangle (1.9,1.7);
\draw(0,0) circle (1);
\draw (0,0)-- (0,1.5);
\draw (0,0)-- (1,1.5);
\draw (0,1.5)-- (1,1.5);
\draw (0,1)-- (0.46,0.69);
\draw (0,1)-- (0.55,0.83);
\draw [rotate around={-146.12:(0.04,0.05)},dash pattern=on 2pt off 2pt] (0.04,0.05) ellipse (1.85 and 1.35);
\draw (0.67,1.01)-- (0,1);
\draw (1.01,0.3) node[anchor=north west] {$ \mathbf{S}_E $};
\draw (1.53,0.74) node[anchor=north west] {$ \mathbf{S}_X $};

\fill [color=black] (0,0) circle (1.5pt);
\draw[color=black] (-0.05,-0.07) node {$0$};
\fill [color=black] (0,1.5) circle (1.5pt);
\draw[color=black] (0.05,1.57) node {$x$};
\fill [color=black] (1,1.5) circle (1.5pt);
\draw[color=black] (1.07,1.55) node {$y$};
\fill [color=black] (0.55,0.83) circle (1.5pt);
\draw[color=black] (0.80,0.85) node {$y/\|y\|$};
\fill [color=black] (0,1) circle (1.5pt);
\draw[color=black] (-0.19,1.09) node {$x/\|x\|$};
\fill [color=black] (0.46,0.69) circle (1.5pt);
\draw[color=black] (0.55,0.65) node {$u$};
\fill [color=black] (0.67,1.01) circle (1.5pt);
\draw[color=black] (0.92,1.05) node {$y/\|x\|$};

\end{tikzpicture}\caption{}
	\label{figure:euclidean-norm-bound-figure}
	\end{figure}
}

Let $x,y\in\sphere X$. By exchanging the roles of $x$ and $y$ if
necessary, we may assume $\norm y_{E}\geq\norm x_{E}\geq1$. Let $P_{y}$
be the orthogonal projection onto the span of $y$ and let $u:=P_{y}\parenth{\frac{x}{\norm x_{E}}}$.
Then $\norm u_{E}\leq1$ and $\norm{y/\norm x_{E}}_{E}\geq1=\norm{y/\norm y_{E}}_{E}$.
Then, by the Pythagorean theorem,\belowdisplayskip=-12pt
\begin{eqnarray*}
\norm{\frac{x}{\norm x_{E}}-\frac{y}{\norm y_{E}}}_{E}^{2} & = & \norm{\frac{x}{\norm x_{E}}-u}_{E}^{2}+\norm{u-\frac{y}{\norm y_{E}}}_{E}^{2}\\
 & \leq & \norm{\frac{x}{\norm x_{E}}-u}_{E}^{2}+\norm{u-\frac{y}{\norm x_{E}}}_{E}^{2}\\
 & = & \norm{\frac{x}{\norm x_{E}}-\frac{y}{\norm x_{E}}}_{E}^{2}\\
 & = & \frac{1}{\norm x_{E}^{2}}\norm{x-y}_{E}^{2}\\
 & \leq & \norm{x-y}_{E}^{2}.
\end{eqnarray*}\qedhere
\end{proof}
In essence, the previous two Lemmas together establish that the local
bi\hyp{}Lipschitzness of the map $\sigma:\sphere X\to\sphere E$
defined by $\sigma(x):=x/\norm x_{E}$ when $\closedball E\subseteq\closedball X\subseteq K\closedball E$.

We will now use the previous results to prove one of our main results
which relates the intrinsic metric on $\sphere X$ to the induced
metric on $\sphere X$ when $\closedball E\subseteq\closedball X\subseteq K\closedball E$
for some $K\geq1$.
\begin{thm}
\label{thm:main-result1} Let $\norm{\cdot}_{E}$ denote the Euclidean
norm on $\R^{2}$ and $\norm{\cdot}_{X}$ be any norm on $\R^{2}$
such that $\closedball E\subseteq\closedball X\subseteq K\closedball E$
for some $K\geq1$. Then
\[
d_{X}\leq\pathmetric X{\sphere X}\leq K^{3}\frac{\pi}{2}d_{X}\quad\mbox{on}\quad\sphere X.
\]
\end{thm}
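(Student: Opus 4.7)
The lower bound $d_X\leq\pathmetric X{\sphere X}$ is free: it is an instance of the generic chain $d_A=\pathmetric AV\leq\pathmetric AM$ recorded in Section~\ref{sec:Preliminaries}.

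For the upper bound, my plan is to use the radial map to pull near-optimal paths on $\sphere E$ back to paths on $\sphere X$. Given $x,y\in\sphere X$ and $\epsilon>0$, I would pick a continuous $\rho:[0,1]\to\sphere E$ from $\sigma(x)$ to $\sigma(y)$ with $L_E(\rho)\leq\pathmetric E{\sphere E}(\sigma(x),\sigma(y))+\epsilon$, and define its radial lift $\tilde\rho:[0,1]\to\sphere X$ by $\tilde\rho(t):=\rho(t)/\norm{\rho(t)}_X$. A direct computation using $\norm{x}_X=\norm{y}_X=1$ shows $\tilde\rho(0)=x$ and $\tilde\rho(1)=y$, so $\tilde\rho$ is a competitor in the infimum defining $\pathmetric X{\sphere X}(x,y)$.

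The technical heart of the proof, and the main obstacle, is the estimate $L_X(\tilde\rho)\leq K^{2}L_E(\rho)$. I would establish it by exploiting uniform continuity of $\rho$ on $[0,1]$ to refine any given partition $P$ to $P'=\{0=t_0<\cdots<t_n=1\}$ fine enough that each consecutive pair $\rho(t_j),\rho(t_{j+1})\in\sphere E$ subtends an angle at $0$ of at most $\arccos(K^{-1})$. Since $\tilde\rho(t_j)$ lies on the same ray from $0$ as $\rho(t_j)$, the angle between $\tilde\rho(t_j)$ and $\tilde\rho(t_{j+1})$ coincides with this one, so Lemma~\ref{lem:estimate_segment} applies; combined with $\norm{\cdot}_X\leq\norm{\cdot}_E$ (from $\closedball E\subseteq\closedball X$), it yields
\[
\norm{\tilde\rho(t_j)-\tilde\rho(t_{j+1})}_X\leq\norm{\tilde\rho(t_j)-\tilde\rho(t_{j+1})}_E\leq K^{2}\norm{\rho(t_j)-\rho(t_{j+1})}_E.
\]
Summing over $P'$ gives $\sum_{P'}\norm{\tilde\rho}_X\leq K^{2}L_E(\rho)$, and monotonicity of partition sums under refinement transfers this bound to $P$, and hence to $L_X(\tilde\rho)$.

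To close the argument, I would chain the previous estimate with Lemma~\ref{lem:bound-eclidean-arclength-by-shortcut}, Lemma~\ref{lem:norm_decreasing}, and the inequality $\norm{\cdot}_E\leq K\norm{\cdot}_X$ coming from $\closedball X\subseteq K\closedball E$:
\[
\pathmetric X{\sphere X}(x,y)\leq L_X(\tilde\rho)\leq K^{2}L_E(\rho)\leq K^{2}\tfrac{\pi}{2}\norm{\sigma(x)-\sigma(y)}_E+K^{2}\epsilon\leq K^{3}\tfrac{\pi}{2}\norm{x-y}_X+K^{2}\epsilon.
\]
Letting $\epsilon\downarrow 0$ finishes the proof. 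Everything past the main estimate is bookkeeping; the only delicate point is the uniform-continuity/refinement step that globalises Lemma~\ref{lem:estimate_segment} along the entire path.
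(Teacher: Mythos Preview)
Your proof is correct and follows essentially the same route as the paper: both arguments radially lift the Euclidean arc to $\sphere X$, refine partitions so that consecutive angles are at most $\arccos(K^{-1})$, apply Lemma~\ref{lem:estimate_segment} segment-by-segment, and then close with Lemmas~\ref{lem:bound-eclidean-arclength-by-shortcut} and~\ref{lem:norm_decreasing} plus the norm comparison $\norm{\cdot}_E\leq K\norm{\cdot}_X$. The only cosmetic difference is that the paper works directly with the explicit arc $\theta\mapsto c(\theta)/\norm{c(\theta)}_X$ and a near-supremum partition of its $X$-length, whereas you phrase the same computation as the cleaner intermediate estimate $L_X(\tilde\rho)\leq K^{2}L_E(\rho)$ for a near-optimal $\sphere E$-path.
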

\begin{proof}
We have already noted in Section~\ref{sec:Preliminaries} that $d_{X}\leq\pathmetric X{\sphere X}$
on $\sphere X$.

Let $x,y\in\sphere X$ be arbitrary. Let $c:\R\to\R^{2}$ be the map
defined by $c(\theta):=(\cos(\theta),\sin(\theta))$ for $\theta\in\R$.
Let $\theta_{x},\theta_{y}\in\R$ be such that $c(\theta_{x})/\norm{c(\theta_{x})}_{X}=x$
and $c(\theta_{y})/\norm{c(\theta_{y})}_{X}=y$. By switching the
roles of $x$ and $y$, if necessary, we may assume that $0\leq\theta_{y}-\theta_{x}\leq\pi$.
Consider the path $\rho:[\theta_{x},\theta_{y}]\to\sphere X$ defined
by $\rho(\theta):=c(\theta)/\norm{c(\theta)}_{X}$ for $\theta\in[\theta_{x},\theta_{y}]$.

Let $\varepsilon>0$ be arbitrary and $\theta_{x}=\theta_{0}<\theta_{1}<\ldots<\theta_{n}=\theta_{y}$
be a partition of $[\theta_{x},\theta_{y}]$ such that
\[
\sum_{j=0}^{n-1}\norm{\rho(\theta_{j})-\rho(\theta_{j+1})}_{X}\geq\lengthoperator X(\rho)-\varepsilon.
\]
We may assume that $0<\theta_{j+1}-\theta_{j}\leq\arccos K^{-1}$,
since the triangle inequality ensures that every refinement of $\curly{\theta_{j}}_{j=1}^{n}$
still satisfies the above inequality.

We note that $\closedball E\subseteq\closedball X\subseteq K\closedball E$
implies $\norm w_{X}\leq\norm w_{E}\leq K\norm w_{X}$ for all $w\in\R^{2}$.
Then, by Lemmas~\ref{lem:estimate_segment},~ \ref{lem:bound-eclidean-arclength-by-shortcut}~and~\ref{lem:norm_decreasing},
we obtain
\customizepaperstyle{ams}{%
	\begin{eqnarray*}
	 &  & \negthickspace\negthickspace\negthickspace\negthickspace\negthickspace\negthickspace\negthickspace\negthickspace\negthickspace\negthickspace\negthickspace\negthickspace\negthickspace\negthickspace \pathmetric X{\sphere X}(x,y)\\
	 & \leq & \lengthoperator X(\rho)\\
	 & \leq & \sum_{j=0}^{n-1}\norm{\rho(\theta_{j})-\rho(\theta_{j+1})}_{X}+\varepsilon\\
	 & \leq & \sum_{j=0}^{n-1}\norm{\rho(\theta_{j})-\rho(\theta_{j+1})}_{E}+\varepsilon\\
	 & \leq & \sum_{j=0}^{n-1}K^{2}\norm{c(\theta_{j})-c(\theta_{j+1})}_{E}+\varepsilon\\
	 & \leq & K^{2}\sup\set{\sum_{j=0}^{m-1}\norm{c(\phi_{j})-c(\phi_{j+1})}_{E}}{
	 			\begin{array}{c}
		 			m\in\N \\
		 			\theta_{x}=\phi_{0}<\ldots<\phi_{m}=\theta_{y}
	 			\end{array}
	 		}+\varepsilon\\
	 & \leq & K^{2}\pathmetric E{\sphere E}\parenth{\frac{x}{\norm x_{E}},\frac{y}{\norm y_{E}}}+\varepsilon\\
	 & \leq & K^{2}\frac{\pi}{2}\norm{\frac{x}{\norm x_{E}}-\frac{y}{\norm y_{E}}}_{E}+\varepsilon\\
	 & \leq & K^{2}\frac{\pi}{2}\norm{x-y}_{E}+\varepsilon\\
	 & \leq & K^{3}\frac{\pi}{2}\norm{x-y}_{X}+\varepsilon.
	\end{eqnarray*}%
}
\customizepaperstyle{lms}{
	\begin{eqnarray*}
			 \pathmetric X{\sphere X}(x,y) & \leq & \lengthoperator X(\rho)\\
					 & \leq & \sum_{j=0}^{n-1}\norm{\rho(\theta_{j})-\rho(\theta_{j+1})}_{X}+\varepsilon\\
					 & \leq & \sum_{j=0}^{n-1}\norm{\rho(\theta_{j})-\rho(\theta_{j+1})}_{E}+\varepsilon\\
					 & \leq & \sum_{j=0}^{n-1}K^{2}\norm{c(\theta_{j})-c(\theta_{j+1})}_{E}+\varepsilon\\
					 & \leq & K^{2}\sup\set{\sum_{j=0}^{m-1}\norm{c(\phi_{j})-c(\phi_{j+1})}_{E}}{
					 		\begin{array}{c}
					 			m\in\N \\
				 				\theta_{x}=\phi_{0}<\ldots<\phi_{m}=\theta_{y}
				 			\end{array}
					 		}+\varepsilon\\
					 & \leq & K^{2}\pathmetric E{\sphere E}\parenth{\frac{x}{\norm x_{E}},\frac{y}{\norm y_{E}}}+\varepsilon\\
					 & \leq & K^{2}\frac{\pi}{2}\norm{\frac{x}{\norm x_{E}}-\frac{y}{\norm y_{E}}}_{E}+\varepsilon\\
					 & \leq & K^{2}\frac{\pi}{2}\norm{x-y}_{E}+\varepsilon\\
					 & \leq & K^{3}\frac{\pi}{2}\norm{x-y}_{X}+\varepsilon.
	\end{eqnarray*}%
}
Since $\varepsilon>0$ was chosen arbitrarily, the result follows.
\end{proof}
Our final result now follows through an easy application of the previous
result and John's Theorem (Theorem~\ref{thm:John's-Theorem}):
\begin{thm}
\label{thm:main-result2} For any norm $\norm{\cdot}_{X}$ on a real
vector space $V$,
\[
d_{X}\leq\pathmetric X{\sphere X}\leq\sqrt{2}\pi\,d_{X}\quad\mbox{on}\quad\sphere X.
\]
\end{thm}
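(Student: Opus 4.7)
The lower bound $d_{X}\leq\pathmetric{X}{\sphere X}$ on $\sphere X$ is immediate from the chain of inequalities recorded in Section~\ref{sec:Preliminaries}. For the upper bound, the plan is to reduce to the two-dimensional setting already settled in Theorem~\ref{thm:main-result1} by applying John's Theorem to the two-dimensional subspace spanned by any given pair of sphere points.

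Fix $x,y\in\sphere X$; the case $x=y$ is trivial, and the one-dimensional case is degenerate, so I assume $\dim V\geq 2$ and select any two-dimensional subspace $W\subseteq V$ containing both points (if $x$ and $y$ are linearly independent, $W=\span\{x,y\}$ works; otherwise pick any plane through $x$). Every path in $\sphere X\cap W$ from $x$ to $y$ is a planar path in $\sphere X$, hence it suffices to bound $\planarpathmetric{X}{\sphere X}(x,y)$, which already dominates $\pathmetric{X}{\sphere X}(x,y)$. Applying Theorem~\ref{thm:John's-Theorem} to the normed space $(W,\norm{\cdot}_{X})$ produces a linear isomorphism $T:(\R^{2},\norm{\cdot}_{E})\to(W,\norm{\cdot}_{X})$ with $\norm{T}\leq 1$ and $\norm{T^{-1}}\leq\sqrt{2}$. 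Pulling back the norm via $\norm{v}_{X'}:=\norm{Tv}_{X}$ makes $T$ a linear isometry between $(\R^{2},\norm{\cdot}_{X'})$ and $(W,\norm{\cdot}_{X})$, and the two operator norm bounds translate directly into the sandwich $\closedball E\subseteq\closedball{X'}\subseteq\sqrt{2}\,\closedball E$.

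With $K=\sqrt{2}$, Theorem~\ref{thm:main-result1} now gives
\[
\pathmetric{X'}{\sphere{X'}}\leq(\sqrt{2})^{3}\frac{\pi}{2}\,d_{X'}=\sqrt{2}\,\pi\,d_{X'}\quad\mbox{on}\quad\sphere{X'}.
\]
All that remains is to transport this bound back to $W$. Setting $\tilde{x}:=T^{-1}x$, $\tilde{y}:=T^{-1}y$, the isometry $T$ restricts to a bijection between $\sphere{X'}$ and $\sphere X\cap W$, and for any continuous $\tilde{\rho}:[0,1]\to\sphere{X'}$ joining $\tilde{x}$ and $\tilde{y}$, the composition $T\circ\tilde{\rho}$ is a planar path in $\sphere X$ joining $x$ and $y$ of equal $\norm{\cdot}_{X}$-length. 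Taking infima and using the isometry to convert $\norm{\tilde{x}-\tilde{y}}_{X'}$ into $\norm{x-y}_{X}$ delivers $\planarpathmetric{X}{\sphere X}(x,y)\leq\sqrt{2}\,\pi\,\norm{x-y}_{X}$, which is the desired inequality.

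The only real obstacle is arithmetic bookkeeping: one must check that the cube $K^{3}=2\sqrt{2}$ coming from Theorem~\ref{thm:main-result1} collapses against the factor $\pi/2$ into the claimed $\sqrt{2}\pi$, and that the direction of the inequalities in the pullback indeed produces the exact configuration $\closedball E\subseteq\closedball{X'}\subseteq\sqrt{2}\closedball E$ required to invoke that theorem. Since both verifications are mechanical, the proof is essentially a one-line combination of John's Theorem with Theorem~\ref{thm:main-result1}.
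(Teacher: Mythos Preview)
Your proof is correct and follows essentially the same route as the paper's: reduce to a two-dimensional subspace $W$ through $x,y$, invoke John's Theorem to sandwich the restricted norm between a Euclidean norm and $\sqrt{2}$ times it, and apply Theorem~\ref{thm:main-result1} with $K=\sqrt{2}$. The only cosmetic difference is that the paper places the Euclidean norm directly on $W$ rather than pulling back to $(\R^{2},\norm{\cdot}_{X'})$ via $T$, but the two presentations are equivalent.
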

\begin{proof}
We have already noted in Section~\ref{sec:Preliminaries} that $d_{X}\leq\pathmetric X{\sphere X}\leq\planarpathmetric X{\sphere X}$
on $\sphere X$. Let $x,y\in\sphere X$ be arbitrary and let $W\subseteq V$
be any two-dimensional subspace containing $x$ and $y$, noting that
then $\planarpathmetric X{\sphere X}(x,y)\leq\pathmetric X{\sphere X\cap W}(x,y).$
By John's Theorem (Theorem~\ref{thm:John's-Theorem}), there exists
a Euclidean norm $\norm{\cdot}_{E}$ on $W$ such that $\norm w_{X}\leq\norm w_{E}\leq\sqrt{2}\norm w_{X}$
for all $w\in W$, i.e., $\closedball E\subseteq\closedball X\cap W\subseteq\sqrt{2}\closedball E$.
Then, by Theorem~\ref{thm:main-result1}, we may conclude that $d_{X}\leq\pathmetric X{\sphere X}\leq\sqrt{2}\pi\,d_{X}$
on $\sphere X$.
\end{proof}

\bibliographystyle{amsplain}
\bibliography{bib}

\end{document}